\numberwithin{equation}{section}
\newtheorem{Theorem}{Theorem}[section]
\newtheorem{Lemma}[Theorem]{Lemma}
\newtheorem{Example}[Theorem]{Example}
\newtheorem{Proposition}[Theorem]{Proposition}
\newtheorem{Definition}[Theorem]{Definition}
\newtheorem{Corollary}[Theorem]{Corollary}
\newtheorem{Remark}[Theorem]{Remark}
\numberwithin{equation}{section}
\def\Rs{\mathcal{R}}
\def\pf{\mathfrak{p}}
\def\Pr{\mathscr{P}}
\def\Qr{\mathscr{Q}}
\def\T{\mathcal{T}}
\def\embed{\hookrightarrow}
\def\la{\langle}
\def\ra{\rangle}
\def\wto{\rightharpoonup}
\def\C{{\mathbb C}}
\def\M{{\mathcal{M}}}
\def\xdag{{x^\dag}}
\def\Lom3{L^2(\Omega)^3}
 \def\p{\partial} 
\def \Vh0{\stackrel{\circ}{V}_h} \def\to{\rightarrow}
\def\Om{\Omega}  
    \def\R{{\mathbb R}}
  \def\f{\frac}  
\def\p{\partial}
\newcommand{\lc}
{\mathrel{\raise2pt\hbox{${\mathop<\limits_{\raise1pt\hbox
{\mbox{$\sim$}}}}$}}}
\newcommand{\gc}
{\mathrel{\raise2pt\hbox{${\mathop>\limits_{\raise1pt\hbox{\mbox{$\sim$}}}}$}}}
\newcommand{\ec}
{\mathrel{\raise2pt\hbox{${\mathop=\limits_{\raise1pt\hbox{\mbox{$\sim$}}}}$}}}
\def\bb{\begin{equation}} \def\ee{\end{equation}}
\def\beqn{\begin{eqnarray}}  \def\eqn{\end{eqnarray}}
\def\beq{\begin{equation}} \def\eeq{\end{equation}}
\def\beqnx{\begin{eqnarray*}} \def\eqnx{\end{eqnarray*}}
\def\bn{\begin{enumerate}} \def\en{\end{enumerate}}
\def\bd{\begin{description}} \def\ed{\end{description}}
\title{\bf Variational source conditions in $L^p$-spaces}
\author{
De-Han Chen\footnote{ School of Mathematics \& Statistics, Central China Normal University,   Wuhan 430079, China
(chen.dehan@uni-due.de). The work of DC was financially supported by
National Natural Science Foundation of China
(Nos.  11701205 and 11871240).  }
\and Irwin Yousept \footnote{Universit\"{a}t Duisburg-Essen, Fakult\"{a}t f\"{u}r Mathematik, Thea-Leymann-Str. 9, D-45127 Essen, Germany,
 (irwin.yousept@uni-due.de). The work of IY was financially supported   by the German Research Foundation Priority Programm DFG
SPP 1962 "Non-smooth and Complementarity-based Distributed Parameter Systems: Simulation and Hierarchical Optimization",  Project YO 159/2-2.
} \footnote{Author to whom any correspondence should be addressed.}
}
\begin{document}

\date{}
\maketitle

\begin{abstract}
We propose and analyze variational source conditions (VSC) for the Tikhonov regularization method with  $L^p$-penalties applied to an ill-posed operator equation in a Banach space. Our analysis   is built on    the celebrated Littlewood-Paley theory and the  concept of   (Rademacher) R-boundedness.   With these two analytical principles, we validate the proposed VSC under         a conditional stability estimate in terms of a dual Triebel-Lizorkin-type norm.  In the final part of the paper,   the developed theory is applied to   an inverse elliptic problem with measure data for the reconstruction of
possibly unbounded diffusion coefficients.  By means of VSC,    convergence rates for the associated Tikhonov regularization with  $L^p$-penalties are obtained.
\end{abstract}

 \noindent {\footnotesize Mathematics Subject Classification:    42B25, 42B35, 43A15
}

\noindent {\footnotesize Key words:   variational source conditions;  Littlewood-Paley theory; R-boundedness; Tikhonov regularization with Lp-penalties; ill-posed operator equation; convergence rates; PDE with measure data; reconstruction of unbounded coefficients.
}

\section{Introduction}
Let us consider an ill-posed  operator equation  of the type
\beq\label{op}
T(x)=y  \quad \textrm{in } Y,
\eeq
where $Y$ is a Banach space, and   $T:D(T)  \to Y$ is a weakly sequentially continuous mapping with a closed and convex subset $D(T) \subset L^p(\Omega,\mu)$ for some  $1<p<+\infty$  and $\sigma$-finite measure $(\Omega,\mu)$.  We underline that the Lebesgue space  $L^p(\Omega,\mu)$ is  real, but the Banach space $Y$ is allowed to be complex or real.  Moreover, the right hand side   $y$ lies in the range of  $T$.  The operator equation   \eqref{op} is supposed  to be {\it locally ill-posed} with a specific characterization  through the stability estimate  \eqref{eq:cond}  comprising a   dual    Triebel-Lizorkin-type norm. The local degree of the ill-posedness is exactly described by the involved dual norm in  \eqref{eq:cond} (Remark \ref{remfortheo} (ii)). To construct a stable approximation to the ill-posed problem \eqref{op}, we employ the celebrated Tikhonov regularization method  taking into account  the noisy data  $y^\delta \in Y$  under the deterministic noise model: $\|y-y^\delta\|_Y\leq \delta$.  More precisely, for a given    $\alpha>0$,  the   solution of  \eqref{op}  is approximated by a minimizer     of
 \beq\label{Tik}
 \min_{x\in D(T)} T_\alpha^\delta(x):= \f{1}{\ell}\|T(x)-y^\delta\|^\ell_Y+\f{\alpha}{p}\|x-x^*\|_{p}^{\hat p},
  \eeq
for a fixed constant $\ell> 1$, $\hat{p}:=\max\{p,2\}$, and a fixed a priori guess $x^*$ of $x$.  In view of the presupposed conditions on $T:D(T)  \to Y$ and $D(T) \subset L^p(\Omega,\mu)$, the
existence and plain convergence   for the Tikhonov regularization method  \eqref{Tik} follow by standard arguments  (see \cite{EHN96,hofmann2007convergence,schuster2012regularization}).

In general, a convergence rate  for \eqref{Tik}  is guaranteed   under    a  smoothness assumption  on the true solution, well-known as the so-called {\it source condition} (cf. \cite{EHN96,Eng89,hofmann2007convergence,Kaltenbacher08,Kaltenbacher09}).  However,  classical source conditions are rather restrictive since they require  the  Fr\'{e}chet differentiability of the operator $T$ and further properties on its first-order derivative (see \cite{Chavent94,Eng89,EZ00,FJZ12, Kaltenbacher08,Kaltenbacher09,Lechleiter08,SEK93}). These restrictions make  the classical source condition less appealing for the convergence analysis of \eqref{Tik}. Our  focus is therefore set on the   concept of     {\it variational source condition} (VSC)
introduced originally by Hofmann et al. \cite{hofmann2007convergence}  in the case of  a linear index function. Convergence rates based on VSC for a general index function  were shown independently in \cite{boct2010extension,flemming2010theory,grasmair2010generalized}.  In contrast to the classical source condition,    VSC is applicable to a wider class of inverse problems with possibly non-smooth forward operators.  More importantly, convergence rates can be deduced from VSC in a straightforward manner  (cf. Hofmann and Math\'e \cite{HofMat12})  without any additional nonlinearity condition such as tangential cone condition.  We refer to
   Hohage and Weidling \cite{hohage2017characterizations} for a general characterization  of VSC in Hilbert spaces.     See also \cite{chen2018variational,hohage2015verification,weidling2015variational} regarding   VSC  for inverse problems governed by partial differential equations (PDEs).  All these results were derived by means of the spectral theory for self-adjoint operators in Hilbert spaces.

Although the study of VSC was initiated in
the Banach space setting,  general    sufficient conditions   for VSC in Banach spaces are somewhat  restrictive (see   \cite{flemming2018theory2,schuster2012regularization}), compared with  those for  the Hilbertian case,  which are mainly related to conditional stability estimates and smoothness of the true solution. Such methodology have been applied to various inverse problems governed by PDEs in the Hilbertian setting {(see \cite{chen2018variational,hohage2015verification,hohage2017characterizations,weidling2015variational})}.  More recently,   less restrictive sufficient conditions for VSC in Besov spaces were proposed
by  Hohage et al. \cite{hohage2019optimal,hohage2020} using a new characterization of subgradient smoothness. Their results lead to optimal convergence rates for the Tikhonov regularization method with wavelet Besov-norm penalties.   However, we note that $ L^p(\Omega,\mu)$ for $p\neq 2$ is not  a Besov space, and therefore \cite{hohage2019optimal,hohage2020} are not directly applicable to \eqref{op}-\eqref{Tik}.

Striving to fill   this gap, our paper develops   novel sufficient criteria for  VSC in the  $L^p$-setting based on a sophisticated  application of
 the Littlewood-Paley decomposition and the concept of the (Rademacher) $\mathcal{R}$-boundedness. The Littlewood-Paley theory is a systematic method to understand various properties of functions  by decomposing them in infinite dydic sums with frequency localized components.     On the other hand, the concept of $\mathcal{R}$-boundedness was initially introduced to  study multiplier theorems for vector-valued functions \cite{bourgain1986vector}. These two mathematical concepts are of central significance in the vector-valued harmonic analysis and its profound application to PDEs (cf.  \cite{bourgain1986vector,hytonen2018analysis}). For the sake of completeness, we provide some basics and standard results concerning the Littlewood-Paley decomposition and $\mathcal{R}$-boundedness in Sections \ref{LPD} and \ref{RB}. Invoking these two analytical tools, we   prove our main result (Theorem \ref{the:vsc}) on the sufficient criteria for VSC in the $L^p$-setting, leading to convergence rates for the Tikhonov regularization method \eqref{Tik}.
The proposed sufficient conditions consist of the existence of a Littlewood-Paley decomposition for the (complex)  space   $L^q(\Omega,\mu;\C)$,  $q:= \frac{p}{p-1}$, together with the previously mentioned conditional stability estimate  \eqref{eq:cond}
and a regularity assumption for the true solution  in terms of  a  Triebel-Lizorkin-type  norm.

The final part of this paper focuses on an inverse reconstruction problem of     possibly  unbounded diffusion $L^p$-coefficients in elliptic equations with measure data. Such   problems are mainly motivated from geological or medical  applications involving dirac measures as source terms. They include  acoustic  monopoles  in full waveform inversion (FWI) and  electrostatic phenomena  with a current dipole source  in Electroencephalography (EEG).  We analyze the mathematical property of the corresponding forward operator  and prove the existence and plain convergence of the corresponding regularized solution (Theorem \ref{tikop}). Finally, we transfer our abstract theoretical finding  to this specific inverse problem and verify its requirements (see Theorem \ref{first} and Lemmas \ref{lemma:prior} and \ref{lemma:prior2}), leading to  convergence rates for the associated Tikhonov regularization method (Corollary \ref{corfinal}).

\section{Preliminaries}

We begin by recalling some terminologies and notations used in the sequel.  Let $X,Y$ be   complex or real Banach spaces.   The space of all linear and bounded operators from $X$ to $Y$ is denoted by
$
B(X,Y)= \{A: X \to Y \textrm{ is     linear and bounded}\},
$
endowed with  the operator norm $\|A\|_{B(X,Y)}:= \sup_{\|x\|_X=1} \|Ax\|_{Y}$.
 If $X=Y$, then we simply write $B(X)$ for $B(X,X)$. The notation $X^*$ stands for the dual space of $X$.   A linear  operator $A:D(A) \subset X  \to X$
is called closed, if its graph $\{(x,Ax), ~ x \in D(A)\}$ is closed in  $X\times X$. If $A:D(A) \subset X  \to X$ is a linear and closed   operator,  then
$$
\rho(A) := \{\lambda \in \C   \mid    \lambda\textup{id} - A : D(A) \to X \textrm{ is  bijective and}\,
(\lambda\textup{id} - A)^{-1}\in B(X) \}
$$
and
$$
\sigma(A) := \C \setminus \rho(A)
$$
denote respectively  the resolvent set and spectrum of $A$. For every $\lambda \in \rho(A)$, the operator
$
R(\lambda,A):= (\lambda \textup{id} - A)^{-1} \in B(X)
$
is referred to as the resolvent operator of $A$.

      If $(\Omega,\mu)$ is a  $\sigma$-finite measure and $1\leq p<+\infty$, then $L^p(\Omega,\mu)$ (resp. $L^p(\Omega,\mu; \C)$) denotes the space of all equivalence classes of  real-valued  (resp. complex-valued) $\mu$-measurable and $p$-integrable  functions with the corresponding norm $\|f\|_{p}:= \left(\int_\Omega |f|^p d\mu\right)^{\frac{1}{p}}$. If $\Omega \subset \R^n$ is a measurable  and $\mu$ is the Lebesgue measure,   then we simply write $L^p(\Omega)$ (resp. ${L^q(\Omega;\C)}$)  for $L^p(\Omega,\mu)$ (resp. ${L^q(\Omega,\mu;\C)}$).  For $f,g\in L^1(\R^n;\C)$, $f\star g$ denotes the convolution of $f$ and $g$.
Moreover, let  $ \langle \cdot,\cdot\rangle_{p,q}:=\int_{\Omega} f\overline{g}d\mu $ stand for the duality product between $f\in L^p(\Omega,\mu;\C)$ and  $g\in L^q(\Omega,\mu;\C)$ for $\f{1}{p}+\f{1}{q}=1$.

Finally,  for nonnegative real numbers $a,b$, we write $a\lesssim b$, if
$a \leq C   b$ holds true for  a positive constant $C>0$ independent of  $a$ and $b$. If $a\lesssim b$ and $b\lesssim a$, we then write $a\cong b$.

\subsection{Sobolev spaces}
For every $-\infty< s<\infty$ and $p \geq 1$, we define the (classical) fractional Sobolev space
$$
H^s_p(\R^n;\C):=\{u\in \mathcal{S}(\R^n;\C)'\mid \|u\|_{H^s_p(\R^n;\C)}:= \|\mathcal{F}^{-1}[(1+|\cdot |^2)^{\f{s}{2}}|(\mathcal{F}u)]\|_{L^p(\R^n;\C)}  <+\infty  \},
$$
where  $\mathcal{S}(\R^n;\C)'$ denotes the tempered   distribution space and  $\mathcal{F}:\mathcal{S}(\R^n;\C)'\to \mathcal{S}(\R^n;\C)'$ is the Fourier transform (see, e.g.,  \cite{Yagi}). For a bounded open set $ U \subset \R^n$ with a Lipschitz boundary $\p U$,  the space  $H^s_p(U;\C)$ with a possibly non-integer exponent $s\geq 0$ is defined as the space of all  complex-valued functions $v\in L^p(U;\C)$ satisfying
$V_{\vert U}=v$ for some $V\in H^s_p(\R^n;\C)$, endowed with the norm
$$
\|v\|_{H^s_p(U;\C)}:=\inf_{\substack{V_{\vert U}=v \\ V\in H_p^s(\R^n;\C)}}\|V\|_{H^s_p(\R^n;\C)}.
$$
 Furthermore, the real   counterpart to  $H^{s}_p(U;\C)$ is simply denoted by $H^{s}_p(U)$.

\begin{Proposition}[{\cite{{taylor2007tools}}, \cite[Theorem 4.10.1]{triebel1995interpolation} and \cite[Theorem 1.36]{Yagi}}]\label{pro:sobolev}
Let $\Omega$ be a bounded  open set  in $\R^n$ with a Lipschitz boundary and $1<\tau<+\infty$.

\begin{enumerate}

\item[\textup{(i)}]    If $\tau<n$, then for any $1\leq s\leq \f{\tau n}{n-\tau}$, the embedding  $H^1_\tau(\Omega;\C)\embed L^s(\Omega;\C)$ is continuous.  It is compact if $s<\f{\tau n}{n-\tau}$.

\item[\textup{(ii)}] If $\tau \ge n$, then for any $1\leq s<+\infty$, the embedding  $H^1_\tau (\Omega;\C)\embed L^s(\Omega;\C)$ is compact.

\item[\textup{(iii)}] Let $0\le s_1,s_2<+\infty$ and $1\leq \tau_1,\tau_2\leq +\infty$. Furthermore,  let $\rho\in (0,1)$ and
\begin{align*}
s:= (1-\rho) s_1+\rho s_2,\quad
\f{1}{\tau}:= \f{1-\rho}{\tau_1} + \f{\rho}{\tau_2}.
\end{align*}
Then, there exists a constant $C>0$ such that
\beq\label{interpolate}
\|u\|_{H^s_\tau(\Omega;\C)} \le C \|u\|_{H^{s_1}_{\tau_1}(\Omega;\C)}^{1-\rho} \|u\|^{\rho}_{H^{s_2}_{\tau_2}(\Omega;\C)} \quad \forall \, u\in H^{s_1}_{\tau_1}(\Omega;\C)\cap H^{s_2}_{\tau_2}(\Omega;\C).
\eeq

\end{enumerate}

\end{Proposition}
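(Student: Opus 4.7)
The plan is to treat (i) and (ii) together using extension to $\mathbb{R}^n$ plus classical Sobolev embedding and Rellich--Kondrachov, and to treat (iii) via complex interpolation of Bessel potential spaces. First, since $\Omega$ is a bounded Lipschitz domain, I would invoke Stein's universal extension theorem to get a single bounded linear extension operator $E\colon H^\sigma_\tau(\Omega;\mathbb{C})\to H^\sigma_\tau(\mathbb{R}^n;\mathbb{C})$ valid for all relevant parameters. This reduces embedding questions on $\Omega$ to the combination "extend, then multiply by a fixed cutoff, then restrict''.

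For part (i), on $\mathbb{R}^n$ the Bessel potential identity $H^1_\tau(\mathbb{R}^n;\mathbb{C})=(I-\Delta)^{-1/2}L^\tau(\mathbb{R}^n;\mathbb{C})$ together with the Hardy--Littlewood--Sobolev inequality yields $H^1_\tau(\mathbb{R}^n;\mathbb{C})\embed L^{\tau n/(n-\tau)}(\mathbb{R}^n;\mathbb{C})$ when $\tau<n$; the continuous embedding into $L^s(\Omega;\mathbb{C})$ for $1\le s\le\tau n/(n-\tau)$ then follows by H\"{o}lder on the bounded set $\Omega$. Compactness for $s<\tau n/(n-\tau)$ is obtained by factoring a bounded sequence $(u_k)$ through $E$, multiplying by a smooth compactly supported cutoff equal to one on $\Omega$, and then applying the Fr\'{e}chet--Kolmogorov criterion: $L^\tau$-equicontinuity of translates on $\mathbb{R}^n$ transfers through $(I-\Delta)^{-1/2}$, and one interpolates between the critical $L^{\tau n/(n-\tau)}$ bound and $L^1$-compactness on the compact support. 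Part (ii), where $\tau\ge n$, follows along the same lines: when $\tau>n$ one first shows $H^1_\tau(\mathbb{R}^n;\mathbb{C})\embed L^\infty_{\rm loc}$ together with a uniform modulus of continuity, giving Arzel\`{a}--Ascoli compactness on $\Omega$, and when $\tau=n$ the embedding into every $L^s(\Omega;\mathbb{C})$ with $s<\infty$ is obtained by interpolating the known $H^1_{\tau-\varepsilon}$ embedding with the BMO--type bound and then applying (i) at a slightly lower exponent.

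For part (iii), the key identification I would use is the complex interpolation theorem for Bessel potential spaces on $\mathbb{R}^n$,
\begin{equation*}
[H^{s_1}_{\tau_1}(\mathbb{R}^n;\mathbb{C}),H^{s_2}_{\tau_2}(\mathbb{R}^n;\mathbb{C})]_\rho = H^s_\tau(\mathbb{R}^n;\mathbb{C}),
\end{equation*}
with $s=(1-\rho)s_1+\rho s_2$ and $1/\tau=(1-\rho)/\tau_1+\rho/\tau_2$, which is the content of Triebel's Theorem 4.10.1 cited in the statement. Combined with the abstract complex-interpolation inequality $\|u\|_{[X_0,X_1]_\rho}\le\|u\|_{X_0}^{1-\rho}\|u\|_{X_1}^\rho$, this immediately gives the bound on $\mathbb{R}^n$. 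To transfer it to the bounded Lipschitz domain $\Omega$, I would use Stein's extension $E$ (universal in $s$ and $\tau$ on Lipschitz domains), apply the $\mathbb{R}^n$-inequality to $Eu$, and use $\|Eu\|_{H^{s_j}_{\tau_j}(\mathbb{R}^n;\mathbb{C})}\lesssim \|u\|_{H^{s_j}_{\tau_j}(\Omega;\mathbb{C})}$ for $j=1,2$, together with the trivial $\|u\|_{H^s_\tau(\Omega;\mathbb{C})}\le\|Eu\|_{H^s_\tau(\mathbb{R}^n;\mathbb{C})}$ from the definition via infima of extensions.

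The main obstacle is part (iii): the interpolation identity for Bessel potential spaces on a bounded Lipschitz domain is not entirely self-evident, because the definition uses infima over arbitrary extensions, which a priori need not realize the complex interpolation functor. The crucial input that removes this difficulty is the existence of a \emph{common} bounded extension operator for the full scale of $H^s_\tau$ spaces on Lipschitz domains (Stein--Rychkov); granted that, the $\mathbb{R}^n$ interpolation theorem transfers cleanly and \eqref{interpolate} follows. All other steps are standard once this transfer principle is in place.
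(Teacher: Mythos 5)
The paper does not prove this proposition at all: it is quoted verbatim from the cited references (Taylor, Triebel's Theorem 4.10.1, Yagi's Theorem 1.36), so there is no internal argument to compare yours against. Your outline is the standard route to these classical facts and is essentially sound: extension to $\R^n$, Hardy--Littlewood--Sobolev for the critical embedding, Rellich--Kondrachov-type compactness below the critical exponent, and complex interpolation $[H^{s_1}_{\tau_1},H^{s_2}_{\tau_2}]_\rho=H^s_\tau$ on $\R^n$ transferred to $\Omega$ by a universal extension operator. Two small points deserve care. First, for non-integer $s$ on a Lipschitz domain the classical Stein operator does not suffice; you need Rychkov's extension operator for the full Triebel--Lizorkin scale, which you do acknowledge. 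Second, the statement as written permits the endpoints $\tau_1,\tau_2\in\{1,\infty\}$, where the complex-interpolation identity for Bessel-potential spaces is delicate; in the only place the paper uses an endpoint ($s_1=s_2=1$, $\tau_2=1$ in Theorem \ref{first}(iii)) the inequality \eqref{interpolate} reduces to H\"older's inequality applied to $u$ and $\nabla u$, so no interpolation machinery is needed there, but your general argument should either exclude the endpoints or handle that case separately. For part (ii) with $\tau=n$, the detour through a BMO-type bound is unnecessary: since $\Omega$ is bounded, $H^1_n(\Omega;\C)\embed H^1_{n-\varepsilon}(\Omega;\C)$ and part (i) at exponent $n-\varepsilon$ already gives compact embedding into every $L^s(\Omega;\C)$, $s<\infty$.
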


In the following, we also summarize the well-known composition rule and product estimates for Sobolev functions (cf. \cite[Chapter 2, Propositions 1.1 and 6.1]{taylor2007tools}).

\begin{Proposition}[Composition rule and product estimates]\label{pro:chain}
	Let $\Omega$ be a bounded open  set in $\R^n$ with a Lipschitz boundary. 	
	\begin{enumerate}
		\item[\textup{(i)}] Let $1\leq \tau <+\infty$. If $F:\mathbb{C}\to \mathbb{C}$ is  globally Lipschitz  and satisfies $F(0)=0$, then  $F(u)\in H^1_\tau(\Omega;\C)$ holds  true for all $u\in H^1_\tau(\Omega;\C)$.
		
		\item[\textup{(ii)}] For all $1<\tau,  \tau_1, \tau_2<+\infty$ satisfying  $\f{1}{\tau}=\f{1}{\tau_1}+\f{1}{\tau_2}$,	
		 there exists a constant $C>0$ such that
		$$
		\|uv\|_{H^1_\tau(\Omega;\C)}\leq C\|u\|_{H^1_{\tau_1}(\Omega;\C)}\|v\|_{H^1_{\tau_2}(\Omega;\C)}	
		$$
		holds true for all   $u\in H^1_{\tau_1}(\Om;\C)$ and $ v\in  H^1_{\tau_2}(\Omega;\C)$.
	\end{enumerate}
\end{Proposition}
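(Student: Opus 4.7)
The plan is to reduce both claims to the classical first-order Sobolev space $W^{1,\tau}(\Omega;\C)$, exploiting the fact that on a bounded Lipschitz domain the Fourier-defined norm on $H^1_\tau(\Omega;\C)$ is equivalent to $\|u\|_{L^\tau}+\|\nabla u\|_{L^\tau}$ (integer smoothness) and that $C^\infty(\o\Omega;\C)$ is dense in $H^1_\tau(\Omega;\C)$. Once this identification is in place, (i) becomes the classical Lipschitz-composition theorem and (ii) becomes a Leibniz-plus-H\"older estimate.

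For (i) I would first regularize $F$ in both arguments. Take a standard mollifier $\rho_\eps$ on $\C\cong\R^2$ and set $F_\eps:=F\star\rho_\eps$; then each $F_\eps\in C^\infty(\C;\C)$ is Lipschitz with the same constant $L$, and $\|F_\eps-F\|_\infty\lesssim \eps L$, so in particular $F_\eps(0)\to F(0)=0$. For a smooth approximant $u_n\in C^\infty(\o\Omega;\C)$ with $u_n\to u$ in $H^1_\tau(\Omega;\C)$, the classical chain rule gives $\nabla F_\eps(u_n)=F_\eps'(u_n)\nabla u_n$, hence
$$
\|F_\eps(u_n)\|_{L^\tau}\le L\|u_n\|_{L^\tau}+|F_\eps(0)|\,|\Omega|^{1/\tau},\qquad \|\nabla F_\eps(u_n)\|_{L^\tau}\le L\|\nabla u_n\|_{L^\tau}.
$$
Passing first $n\to\infty$ (using the Lipschitz continuity of $F_\eps$ in $L^\tau$ and dominated convergence for $F_\eps'(u_n)\nabla u_n$) and then $\eps\to 0$ (using uniform convergence $F_\eps\to F$ and weak compactness of $\{F_\eps'(u)\nabla u\}_{\eps}$ in $L^\tau(\Omega;\C)$), I obtain $F(u)\in H^1_\tau(\Omega;\C)$ together with $\|F(u)\|_{H^1_\tau(\Omega;\C)}\lesssim L\|u\|_{H^1_\tau(\Omega;\C)}$.

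For (ii) the Leibniz identity $\nabla(uv)=u\nabla v+v\nabla u$ is immediate for $u,v\in C^\infty(\o\Omega;\C)$ and extends to $u\in H^1_{\tau_1}(\Omega;\C)$, $v\in H^1_{\tau_2}(\Omega;\C)$ by density, each of the two products on the right being a priori in $L^\tau$ by H\"older with $\f{1}{\tau}=\f{1}{\tau_1}+\f{1}{\tau_2}$. The same inequality yields $\|uv\|_{L^\tau}\le \|u\|_{L^{\tau_1}}\|v\|_{L^{\tau_2}}$ and $\|u\nabla v+v\nabla u\|_{L^\tau}\le \|u\|_{L^{\tau_1}}\|\nabla v\|_{L^{\tau_2}}+\|v\|_{L^{\tau_2}}\|\nabla u\|_{L^{\tau_1}}$, from which the desired product estimate follows. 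The main obstacle I anticipate lies in part (i): since $F$ is only Lipschitz, Rademacher's theorem tells us its derivative exists merely almost everywhere on $\C$, so $F'(u)$ is a priori ill-defined on preimages of the null set of non-differentiability, and the chain rule must be interpreted with care. The mollification step above is precisely what bypasses this difficulty; once it is handled, everything else reduces to routine density and H\"older arguments.
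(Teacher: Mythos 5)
The paper offers no proof of this proposition at all: it is imported verbatim from Taylor's \emph{Tools for PDE} (Chapter 2, Propositions 1.1 and 6.1), so your self-contained argument is necessarily a ``different route,'' and it is in fact the standard textbook proof. Part (ii) is correct as written: on a bounded Lipschitz domain $H^1_\tau(\Omega;\C)=W^{1,\tau}(\Omega;\C)$ for $1<\tau<\infty$ (Calder\'on's theorem on $\R^n$ plus the Stein extension operator), $C^\infty(\overline\Omega;\C)$ is dense, and Leibniz plus H\"older with $\frac{1}{\tau}=\frac{1}{\tau_1}+\frac{1}{\tau_2}$ gives exactly the claimed bound. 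In part (i) the mollification of $F$ is the right way to bypass the failure of the classical chain rule for merely Lipschitz $F$; note only that, since $F$ need not be holomorphic, ``$F_\eps'(u_n)\nabla u_n$'' must be read as the real $2\times 2$ Jacobian $DF_\eps(u_n)$ applied to each component of $\nabla u_n$ --- the pointwise bound $|\nabla(F_\eps\circ u_n)|\le L\,|\nabla u_n|$, which is all you use, is then correct, and at the final limit the family $\{DF_\eps(u)\nabla u\}_\eps$ is dominated by $L|\nabla u|$, so weak compactness holds even in $L^1$ via Dunford--Pettis.

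The one genuine gap is the endpoint $\tau=1$, which part (i) of the statement includes. Your entire argument is carried out in $W^{1,\tau}(\Omega;\C)$ and then transported back to $H^1_\tau(\Omega;\C)$ via the norm equivalence you assert at the outset; but the identity $H^1_\tau(\R^n;\C)=W^{1,\tau}(\R^n;\C)$ is a multiplier theorem that requires $1<\tau<\infty$ and fails at $\tau=1$, where the Bessel-potential space defined by the paper's Fourier norm does not coincide with $\{u:\ u,\nabla u\in L^1\}$. Showing $F(u)\in W^{1,1}(\Omega;\C)$ therefore does not yield $F(u)\in H^1_1(\Omega;\C)$ in the paper's sense. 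This affects only the boundary case of the statement, not its use: everywhere the proposition is invoked in the paper (in the proof of Theorem \ref{first}, via \eqref{eq:g}) the exponents lie in $(1,\infty)$. Either restrict your proof of (i) to $1<\tau<\infty$, or supply a separate argument for $\tau=1$ working directly with the Fourier-analytic definition.
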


\subsection{Littlewood-Paley decomposition} \label{LPD}

In its simplest manifestation, the Littlewood-Paley  (LP) decomposition is a method to understand various properties of functions by decomposing them into an infinite dydic sums of frequency localized components.   A prominent  example  for an LP decomposition can be found in the classical theory of harmonic analysis as follows: Let $1<q<+\infty$ and $s\geq 0$. Then, every $f\in H^s_q(\R^n;\C)$ can be decomposed into
\beq\label{clp}
f= \sum_{j=0}^\infty  f\star  \check{\varphi}_j  \ \ \text{and}\,\ \
\|f\|_{H^s_q(\R^n;\C)}\cong \|(\sum_{j=0}^\infty 2^{2js} |f\star  \check{\varphi}_j|^2)^{\f{1}{2}} \|_{q},
\eeq
where $\{\varphi_j\}_{j=0}^\infty$ is a family of compactly supported smooth functions satisfying $\text{supp}(\varphi_0) \subset \{\xi\mid |\xi|\leq 2\}$,  $\text{supp}(\varphi_1)\subset \{\xi\mid 1\leq |\xi|\leq 4\}$, $\varphi_j(\cdot):=\varphi_1(\cdot 2^{1-j})$ for $j\geq 2$ and
$
\sum_{j=0}^\infty \varphi(\xi)=1
$ for all $\xi\in \R^n$. Furthermore,  $ \check{\varphi}_j$ denotes the inverse Fourier transfomation of $ {\varphi}_j$ (cf. \cite[Section 4.1]{stein1993harmonic}). Motivated by \eqref{clp}   and following \cite{kriegler2016paley}, we introduce the following definition:

\begin{Definition}\label{def:LP}
Let $(\Omega,\mu)$ be a $\sigma$-finite measure and $1<q<+\infty$.
We say that  $L^q(\Omega,\mu;\C)$ admits  an LP decomposition  if there is a family of uniformly bounded and {pairwisely commutative} linear operators  $\{P_j\}_{j=0}^\infty\subset B( L^q(\Omega,\mu;\C))$ satisfying the following conditions:
\begin{enumerate}
\item[\textup{(i)}] The partition of identity:
\beq\label{partition}
z=\sum_{j=0}^\infty P_j z    \quad \forall\, z\in  L^q(\Omega,\mu;\C).
\eeq
\item[\textup{(ii)}]  Almost orthogonality:
\beq\label{almost}
P_jP_k z=0  \quad \forall\, z\in L^q(\Omega,\mu;\C) \quad \forall j,k\in \mathbb N \cup \{0\} \textrm{ with }   |j-k|\geq 2.
\eeq
\item[\textup{(iii)}] Norm equivalence: there exists a constant  $c^*\geq 1$ such that
\beq\label{decomp:A}
\frac{1}{c^*}  \|z\|_{q}  \leq  \|(\sum_{j=0}^\infty |P_j z|^2)^{\f{1}{2}}\|_{q}   \leq c^*   \|z\|_{q}   \quad \forall z\in  L^q(\Omega,\mu;\C).
\eeq
\end{enumerate}
\end{Definition}
\begin{Remark}    \normalfont The third condition in Definition \ref{def:LP} implies that  $\{P_j\}_{j=0}^\infty$ is uniformly bounded in $B( L^q(\Omega,\mu;\C))$.  Therefore, we may  remove the uniform boundedness assumption on  $\{P_j\}_{j=0}^\infty$ in the definition.
From the partition of identity  and the almost orthogonality, it follows that
\beq\label{pj}
P_j(P_{j}+P_{j-1}+P_{j+1})=P_j\quad \forall j\geq 1.
\eeq
Note that \eqref{clp} gives a classical example of  an LP decomposition on $L^q(\R^n;\C)$. Also,
if $q=2$ and $\{e_j\}_{j=0}^\infty$ is an orthonormal basis of $L^2(\Omega,\mu;\C)$, then the family of operators $\mathcal{P}=\{P_j\}_{j=0}^\infty$ with  $P_jz:=(z ,e_j)_{L^2(\Omega,\mu;\C)} z$ is an LP decomposition on $L^2(\R^n;\C)$.
\end{Remark}

With the help of the  LP decomposition and inspired by the classical Triebel-Lizorkin spaces, if $L^q(\Omega,\mu)$ admits an LP decomposition $\mathcal{P}=\{P_j\}_{j=0}^\infty\subset B( L^q(\Omega,\mu;\C))$, then
  the following space
\beq\label{Fspace}
F^s_{q}(\mathcal{P}):=\{z\in L^q(\Omega,\mu;\C)\mid \|z\|_{F^s_{q}(\mathcal{P})}:=\|(\sum_{j=0}^\infty 2^{2js}|P_j z|^2)^{\f{1}{2}}\|_{q}<+\infty\},\quad \forall s\geq 0,
\eeq
defines a  Banach space. Obviously, $F^0_q(\mathcal{P})= L^q(\Omega,\mu;\C)$ holds true with norm equivalence.  According to Definition \ref{def:LP}, $F^s_q(\mathcal{P})$ is  a dense subspace of $L^q(\Omega,\mu;\C)$, and the embedding $F^s_q(\mathcal{P})\embed L^q(\Omega,\mu;\C) $ is  continuous.

\subsection{$\Rs$-boundedness} \label{RB}

\begin{Definition}\label{def:R}
Let $(\Omega,\mu)$ be a $\sigma$-finite measure and $1<q<+\infty$.
  A subset $\T\subset  B( L^q(\Omega,\mu;\C))$ is called $\Rs$-bounded, if there exists a constant $C>0$  such that for all $n\in \mathbb{N}$, $T_1,\ldots, T_n \in \T $ and $z_1,\ldots, z_n \in  L^q(\Omega,\mu;\C) $, the following inequality holds
\beq\label{def:R0}
\|(\sum_{k=1}^n |T_k z_k|^2)^{\f 1 2}\|_{q}\leq  C  \|(\sum_{k=1}^n |z_k|^2)^{\f 1 2}\|_{q}.
\eeq
The  infimum of all such constants $C>0$ is called the $\Rs$-bound of $\T$ and denoted by $\Rs(\T)$.
\end{Definition}

\begin{Remark}    \normalfont
The notion of $\Rs$-boundedness  can also be  defined  by  using Rademacher functions, and  Definition   \ref{def:R} is also referred to as $\ell^2$-boundedness (cf. \cite{hytonen2018analysis}) { or $\mathcal{R}^2$-boundedness (cf. \cite{Kunstmann}).}  By Khintchine's inequality, these two definitions are equivalent in $ L^q(\Omega,\mu;\C)$ (see \cite[Remark 4.1.3]{pruss2016moving} or \cite[Proposition 6.3.3]{hytonen2018analysis}).  Since  our work only focuses    on   $L^q(\Omega,\mu;\C)$ and considers   \eqref{def:R0},  we  choose the terminology  ``$\Rs$-boundedness''.  We note that  for the case $q=2$,  the $\Rs$-boundedness of $\T$ is equivalent to the  uniform boundedness of $\T$ (cf.  \cite[Remark 4.1.3]{pruss2016moving}).

\end{Remark}

We recall some elementary properties regarding to $\Rs$-boundedness.

\begin{Proposition}[cf. {\cite[Example 8.1.7 and Proposition 8.1.19]{hytonen2018analysis} and \cite[Propositions 2.9 and 2.10]{Kunstmann}}]\label{pro:R} Let $(\Omega,\mu)$ be a $\sigma$-finite measure and $1<q<+\infty$.  Then, the following claims hold true:
\begin{enumerate}
\item[\textup{(i)}]  Every singleton $\{T\}$  in $ B(L^q(\Omega,\mu;\C))$ is $\Rs$-bounded with
$$
\Rs(\{T\})\leq C_G\|T\|_{B(L^q(\Omega,\mu;\C))},
$$
where $C_G>0$ denotes the Grothendieck's constant.  In particular,
$\mathcal{R}(\{\textup{id}\})=1$.

\item[\textup{(ii)}]  If $\mathcal{T}, \mathcal{S}\subset B(L^q(\Omega,\mu;\C))$ are $\Rs$-bounded subsets, then both
$\mathcal{T}+ \mathcal{S}$ and $\mathcal{T}\cup \mathcal{S}$ are $\Rs$-bounded with
$$
\Rs(\mathcal{T}+ \mathcal{S})\leq \Rs(\mathcal{T})+\Rs(\mathcal{S})\quad \text{and} \quad \Rs(\mathcal{T}\cup \mathcal{S})\leq \Rs(\mathcal{T})+\Rs(\mathcal{S}).
$$
\end{enumerate}

\end{Proposition}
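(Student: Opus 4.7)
The plan is to dispatch part (ii) with elementary pointwise $\ell^2$-triangle inequalities, and to attack part (i) via a Rademacher-type argument combined with Grothendieck's factorisation theorem, mirroring \cite[Example 8.1.7 and Proposition 8.1.19]{hytonen2018analysis}.

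For part (ii), given $R_1,\dots,R_n\in\mathcal{T}+\mathcal{S}$ and $z_1,\dots,z_n\in L^q(\Om,\mu;\C)$, I would write $R_k=T_k+S_k$ with $T_k\in\mathcal{T}$ and $S_k\in\mathcal{S}$, and apply the pointwise $\ell^2$-triangle inequality
\[ \Bigl(\sum_{k=1}^n|R_k z_k|^2\Bigr)^{\f{1}{2}} \le \Bigl(\sum_{k=1}^n|T_k z_k|^2\Bigr)^{\f{1}{2}} + \Bigl(\sum_{k=1}^n|S_k z_k|^2\Bigr)^{\f{1}{2}}. \]
Taking the $L^q(\Om,\mu)$-norm, applying Minkowski, and invoking \eqref{def:R0} for $\mathcal{T}$ and $\mathcal{S}$ separately would give the claimed bound for $\mathcal{T}+\mathcal{S}$. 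For $\mathcal{T}\cup\mathcal{S}$, I would partition $\{1,\dots,n\}=K_T\cup K_S$ according to whether $R_k\in\mathcal{T}$ or $R_k\in\mathcal{S}$, split the $\ell^2$-sum accordingly, and combine the two defining estimates together with the trivial monotonicity $\|(\sum_{k\in K_T}|z_k|^2)^{1/2}\|_q \le \|(\sum_{k=1}^n|z_k|^2)^{1/2}\|_q$, which holds because all summands are nonnegative.

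Part (i) is the main obstacle: $T$ acts on scalar-valued $L^q(\Om,\mu;\C)$, while $\Rs$-boundedness tests it against $\ell^2$-valued sequences. My strategy is to exploit the scalar linearity of $T$. For independent Rademacher variables $\{\epsilon_k\}$ on an auxiliary probability space with expectation $\mathbb{E}$, the identity $\sum_k\epsilon_k Tz_k = T(\sum_k\epsilon_k z_k)$ is valid in $L^q(\Om,\mu;\C)$ for each fixed realisation, so Fubini and the operator norm bound for $T$ would yield
\[ \mathbb{E}\Bigl\|\sum_k\epsilon_k Tz_k\Bigr\|_q^q \le \|T\|_{B(L^q(\Om,\mu;\C))}^q\, \mathbb{E}\Bigl\|\sum_k\epsilon_k z_k\Bigr\|_q^q. \]
The two-sided Kahane--Khintchine inequality would then convert each Rademacher sum into an $\ell^2$-square function in $L^q(\Om,\mu)$. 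The delicate point is that the ratio of the $L^q$-Khintchine constants is itself $q$-dependent, so pinning down the universal Grothendieck bound $C_G$ requires invoking Grothendieck's theorem on the factorisation of operators through a Hilbert space; this upgrades the elementary Rademacher estimate to the sharp extension inequality $\|T\otimes\textup{id}_{\ell^2}\|_{B(L^q(\Om,\mu;\ell^2))}\le C_G\,\|T\|_{B(L^q(\Om,\mu;\C))}$, which is the content of claim (i). Finally, the identity $\Rs(\{\textup{id}\})=1$ drops out of \eqref{def:R0} by choosing every $T_k=\textup{id}$, since the two sides of the inequality then coincide, so the optimal constant is exactly one.
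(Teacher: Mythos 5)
Your argument is correct and follows the same route as the references the paper cites for this proposition (the paper itself offers no proof): part (ii) is the standard pointwise $\ell^2$-triangle-inequality/Minkowski argument together with the monotonicity of the square function under restriction of the index set, and part (i) is the scalar Khintchine--Fubini reduction, with the sharp $q$-independent constant $C_G$ correctly attributed to Grothendieck's theorem on the boundedness of $T\otimes\textup{id}_{\ell^2}$ on $L^q(\Omega,\mu;\ell^2)$. The only caveat is that the Grothendieck step is invoked rather than proved, but that is exactly the level at which the paper treats it, so nothing is missing relative to the source.
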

 Let us mention that the exact value of Grothendieck's constant is still an open problem, and it is known that $\f{\pi}{2}\leq C_G\leq \f{\pi}{2\ln(1+\sqrt{2})}$ (cf. \cite{krivine1978constantes}).  A direct consequence of Proposition \ref{pro:R} is summarized in the following corollary:

\begin{Corollary}
If a subset $\mathcal{T}\subset B( L^q(\Omega,\mu;\C))$ is finite, then it is $\Rs$-bounded.
\end{Corollary}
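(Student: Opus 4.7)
The plan is to derive the corollary directly from Proposition \ref{pro:R} by an elementary induction on the cardinality of $\mathcal{T}$. Write $\mathcal{T}=\{T_1,\ldots,T_n\}$ with $n\in\mathbb{N}$. By Proposition \ref{pro:R}(i), each singleton $\{T_k\}$ is $\mathcal{R}$-bounded with bound at most $C_G\|T_k\|_{B(L^q(\Omega,\mu;\C))}$, so the base case $n=1$ is immediate.

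For the inductive step, suppose every subset of $B(L^q(\Omega,\mu;\C))$ of cardinality $n$ is $\mathcal{R}$-bounded, and consider a set of cardinality $n+1$, which we decompose as $\{T_1,\ldots,T_n\}\cup\{T_{n+1}\}$. The induction hypothesis gives $\mathcal{R}$-boundedness of the first set, Proposition \ref{pro:R}(i) yields $\mathcal{R}$-boundedness of the singleton $\{T_{n+1}\}$, and Proposition \ref{pro:R}(ii) then implies that their union is $\mathcal{R}$-bounded with
\[
\mathcal{R}(\mathcal{T}) \;\leq\; \sum_{k=1}^{n+1}\mathcal{R}(\{T_k\}) \;\leq\; C_G \sum_{k=1}^{n+1}\|T_k\|_{B(L^q(\Omega,\mu;\C))}.
\]

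There is no real obstacle here: the only (trivial) point to check is that iterating the union bound in Proposition \ref{pro:R}(ii) is legitimate for any finite number of steps, which is immediate from induction. The corollary is thus merely the statement that the class of $\mathcal{R}$-bounded subsets of $B(L^q(\Omega,\mu;\C))$ is closed under finite unions and contains all singletons.
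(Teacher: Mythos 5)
Your proof is correct and is exactly the argument the paper intends: the corollary is stated as a direct consequence of Proposition \ref{pro:R}, obtained by combining part (i) for singletons with an iterated application of the union bound in part (ii). The explicit induction and the resulting quantitative bound $\Rs(\mathcal{T})\leq C_G\sum_{k}\|T_k\|_{B(L^q(\Omega,\mu;\C))}$ are fine.
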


\subsection{Existence of  LP decompositions via sectorial operators}

In this section, we recall the notion of the sectorial operator and  discuss some LP decomposition for $L^q(\Omega,\mu;\C)$ with the help of sectorial operators. In the following, let $X$ be a complex Banach space.
 For $\omega\in (0,\pi)$, let
$\Sigma_\omega:=\{z\in \mathbb{C}\backslash\{0\}\mid |\arg z|<\omega\}$ denote the symmetric sector around the positive axis of aperture angle $2\omega$.

\begin{Definition}[\cite{Hasse,kriegler2016paley}]
Let $\omega\in (0,\pi)$.  A  linear and closed operator $A:D(A)\subset X\to X$ is called $\omega$-sectorial if the following conditions hold:
\begin{enumerate}
 \item[\textup{(i)}] the   spectrum $\sigma(A)$ is contained in $\overline{\Sigma_\omega}$;

 \item[\textup{(ii)}] $R(A)$ is dense in $X$;

 \item[\textup{(iii)}]
$
\forall\, \theta\in (\omega,\pi) \,\, \exists \,\, C_\theta>0 \,\, \forall \,\lambda\in \mathbb{C}\backslash \overline{\Sigma_\theta}:
  \|\lambda R(\lambda,A)\|\leq C_\theta.
$
\end{enumerate}
We say that $A$ is $0$-sectorial operator, if $A$ is $\omega$-sectorial for all  $\omega \in (0,\pi)$.
\end{Definition}
 Note that (ii) and (iii) imply that every $\omega$-sectorial operator is injective (cf. \cite{Hasse}).
For every $\theta\in (0,\pi)$, we denote by $H^\infty(\Sigma_\theta; \C)$  the space of all bounded holomorphic functions on $\Sigma_\theta$, which is a Banach algebra with the norm $\|f\|_{\infty,\theta}:=\sup_{z\in \Sigma_\theta}|f(z)|$.
Moreover, we introduce   the subspace
$H_0^\infty(\Sigma_\theta;\C):=\{f\in H^\infty(\Sigma_\theta;\C)\mid \exists\, C, \epsilon>0 \,\text{such that}\, |f(z)|\leq C \f{|z|^\epsilon}{(1+|z|)^{\epsilon}}\}$. Then, for an $\omega$-sectorial operator $A$ and a function $f\in H_0^\infty(\Sigma_\theta;\C)$ with $\theta\in (\omega,\pi)$, one can define a   linear and bounded operator
\beq\label{Cauchy}
 G_A(f):X\to X,\quad G_A(f):=\f{1}{2\pi i}\int_{\Gamma} f(\lambda)R(\lambda,A) d\lambda,
\eeq
where $\Gamma$ is the boundary of the sector $\Sigma_\sigma$ with $\sigma\in (\omega,\theta)$, oriented counterclockwise.  Note that by the Cauchy integral formula  for vector-valued holomorphic functions, the above integral has the same value for all $\sigma \in (\omega,\theta)$. Therefore, the definition \eqref{Cauchy} is independent of the choice of $\Gamma$.
If there exists a constant $C>0$ such that
 $$\|G_A(f)\|_{B(X)}\leq C\|f\|_{\infty,\theta}\quad \forall  f\in H^\infty_0(\Sigma_\theta;\C),
 $$  then we say that $A$ has a bounded $H^\infty_0(\Sigma_\theta;\C)$-calculus. In this case, the Cauchy integral formula \eqref{Cauchy} can be extended to a bounded
homomorphism $H^\infty(\Sigma_\theta;\C)\to B(X), \,f\mapsto G_A(f)$.   For any $\alpha\geq 0$, we can choose an integer $n$ strictly larger than
$\alpha$ such that the function $f_\alpha(z):=z^\alpha(1+z)^{-n}$ belongs to $ H_0^\infty(\Sigma_\theta;\C)$, and so
  the operator
$$
A^\alpha:D(A^\alpha)\subset X\to X, \quad
A^\alpha :=(1+A)^{n}G_A(f_\alpha)
 $$
 defines a linear and closed operator (cf. \cite[Lemma A.1.3]{Hasse}) with the effective domain
 $
 D(A^\alpha):=\{x\in X\mid  G_A({f_\alpha}) x\in D(A^n)\}.
 $
In particular,  $D(A^\alpha)$ equipped with the graph norm
\beq\label{interp}
 \|A^\alpha \cdot\|_X+\|\cdot\|_X
\eeq
defines a Banach space.  Clearly, $D(A^0)=X$ and $D(A^1)=D(A)$.

Let $(A,D(A))$ be a 0-sectorial operator and $\eta>0$. If there is a constant $C>0$  such that for all $\omega\in (0,\pi)$,
$$
\|G_A(f)\|_{B(X)}\leq \f{C}{\omega^\eta}\|f\|_{\infty,\omega} \quad \forall \, f\in H^\infty(\Sigma_\omega),
$$
then we say that $A$ has  a (bounded) $\mathcal{M}^\eta$-calculus (see e.g. \cite[Theorem 4.10]{cowling1996banach} and \cite{kriegler2016paley}).  Another equivalent definition of  $\mathcal{M}^\eta$-calculus
can be found in \cite{kriegler2016paley} (see  \cite[Theorem 4.1]{cowling1996banach} for the proof).

Under the existence of a 0-sectorial operator with   $\mathcal{M}^\eta$-calculus for some $\eta>0$, the following key lemma guarantees
the existence of an LP decomposition for $L^q(\Omega,\mu;\C)$:


\begin{Lemma}[{\cite[Theorem 4.1 and Theorem 4.5]{kriegler2016paley}}]\label{lemma:m}
Let $(\Omega,\mu)$ be a $\sigma$-finite measure.
 If $X=L^q(\Omega,\mu;\C)$  for some
$1<q<+\infty$, and  there exists a $0$-sectorial operator $A:D(A)\subset X\to X$ with $\M^\eta$-calculus for some $\eta>0$,  then $X$ admits an LP-decomposition $\mathcal{P}=\{P_j\}_{j=0}^\infty$  such that
\beq\label{equvi:F}
F^s_q(\mathcal{P})=D(A^s)\quad \forall \,s\geq  0,
\eeq
where $F^s_q(\mathcal{P})$ is defined as in  \eqref{Fspace}.
\end{Lemma}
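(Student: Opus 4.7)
My plan is to construct the family $\mathcal{P}=\{P_j\}_{j=0}^\infty$ by applying the bounded functional calculus of $A$ to a smooth dyadic partition of unity on $(0,\infty)$. First I fix $\phi_0,\phi_1\in C^\infty_c([0,\infty))$ with $\textup{supp}(\phi_0)\subset [0,2]$ and $\textup{supp}(\phi_1)\subset[1,4]$, define $\phi_j(t):=\phi_1(2^{1-j}t)$ for $j\geq 2$, and normalize so that $\sum_{j=0}^\infty \phi_j\equiv 1$ on $(0,\infty)$. Since each $\phi_j$ extends to an element of $H^\infty(\Sigma_\omega;\C)$ (in fact of $H^\infty_0(\Sigma_\omega;\C)$ for $j\geq 1$), the $\mathcal{M}^\eta$-calculus of $A$ yields well-defined operators $P_j:=G_A(\phi_j)\in B(X)$. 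Pairwise commutativity and the almost orthogonality \eqref{almost} follow at once from the multiplicativity $G_A(\phi_j)G_A(\phi_k)=G_A(\phi_j\phi_k)$, together with $\phi_j\phi_k\equiv 0$ whenever $|j-k|\geq 2$. The partition of identity \eqref{partition} comes from a dominated-convergence-type argument in the $H^\infty$-calculus: the truncations $\sum_{j=0}^N\phi_j$ are uniformly bounded on a suitable sector and converge pointwise to $1$, so $\sum_{j=0}^N P_j z\to z$ first on the dense subspace $D(A)\cap R(A)$ (which is dense in $X$ because $A$ is $0$-sectorial) and then on all of $X$ by uniform boundedness.

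The decisive step is the norm equivalence \eqref{decomp:A}. I would apply Khintchine's inequality to write
\[
\Bigl\|\Bigl(\sum_{j=0}^\infty |P_j z|^2\Bigr)^{1/2}\Bigr\|_q \;\cong\; \Bigl(\mathbb{E}\Bigl\|\sum_{j=0}^\infty \varepsilon_j P_j z\Bigr\|_q^q\Bigr)^{1/q},
\]
where $\{\varepsilon_j\}$ is an i.i.d.\ Rademacher sequence, and then use the $\mathcal{M}^\eta$-calculus to show that the family
\[
\Bigl\{G_A\Bigl(\sum_{j\in F}\varepsilon_j \phi_j\Bigr)\;:\; F\subset\mathbb{N}\text{ finite},\ \varepsilon\in\{\pm 1\}^{F}\Bigr\}
\]
is uniformly bounded in $B(X)$. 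This is a Mihlin/Paley--Littlewood-type multiplier statement hidden in the quantitative bound $\|G_A(f)\|_{B(X)}\leq C\omega^{-\eta}\|f\|_{\infty,\omega}$: the signed sums $\sum_{j\in F}\varepsilon_j\phi_j$ admit a uniform $H^\infty$-bound on a fixed sector independent of $F$ and $\varepsilon$. I expect this square-function estimate (which is essentially the Kalton--Weis-type dichotomy linking $\mathcal{M}^\eta$-calculus to Littlewood--Paley equivalence) to be the main technical obstacle; once the upper bound in \eqref{decomp:A} is obtained, the lower bound is recovered by duality, passing to the adjoint $A^*$ on $L^{q'}(\Omega,\mu;\C)$, which inherits the same $\mathcal{M}^\eta$-calculus.

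Finally, to characterize $F^s_q(\mathcal{P})=D(A^s)$, I would exploit the fact that on the support of $\phi_j$ the function $t^s$ is comparable to $2^{js}$. Setting $\tilde\phi_j(t):=2^{-js}t^s\phi_j(t)$ yields an equibounded family of dyadic bumps satisfying the same support and smoothness hypotheses as $\{\phi_j\}$, and the composition rule of the $H^\infty$-calculus gives
\[
2^{js}P_j z \;=\; G_A(\tilde\phi_j)\,A^s z \qquad \forall\, z\in D(A^s).
\]
Reapplying the square-function estimate of the previous paragraph, now to the family $\{G_A(\tilde\phi_j)\}$ in place of $\{P_j\}$, delivers
\[
\|z\|_{F^s_q(\mathcal{P})}\;\cong\;\Bigl\|\Bigl(\sum_{j=0}^\infty |G_A(\tilde\phi_j)A^s z|^2\Bigr)^{1/2}\Bigr\|_q \;\cong\; \|A^s z\|_q.
\]
Combined with the continuous embedding $F^s_q(\mathcal{P})\embed L^q(\Omega,\mu;\C)$, this shows that the $F^s_q(\mathcal{P})$-norm is equivalent to the graph norm \eqref{interp} of $A^s$, which closes the identification $F^s_q(\mathcal{P})=D(A^s)$ along the same lines as \cite{kriegler2016paley}.
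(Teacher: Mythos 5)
The paper itself offers no proof of this lemma --- it is imported verbatim from Kriegler--Weis --- and your outline does reconstruct the architecture of the cited proof at the right level: dyadic bumps fed into the functional calculus, Khintchine plus a randomized multiplier bound for the square-function equivalence \eqref{decomp:A}, duality for the lower bound, and the rescaling $t^s\cong 2^{js}$ on $\mathrm{supp}\,\phi_j$ for the identification of $D(A^s)$. There is, however, a genuine gap at the very first step. A nonzero $C_c^\infty$ bump $\phi_j$ supported in a dyadic annulus of $(0,\infty)$ does \emph{not} extend to an element of $H^\infty(\Sigma_\omega;\C)$, let alone of $H_0^\infty(\Sigma_\omega;\C)$: a holomorphic function on a sector vanishing on an open subinterval of the positive axis vanishes identically. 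Consequently $P_j:=G_A(\phi_j)$ cannot be defined through the Cauchy-integral calculus \eqref{Cauchy}, and the multiplicativity and convergence lemmas you invoke for commutativity, \eqref{almost} and \eqref{partition} are not available as written. The missing idea --- and the actual content of the $\M^\eta$ hypothesis --- is the extension of $f\mapsto G_A(f)$ from $\bigcup_{\omega}H^\infty(\Sigma_\omega;\C)$ to a Mihlin/H\"ormander class of non-holomorphic functions on $(0,\infty)$ that contains the $\phi_j$; the quantitative blow-up bound $\|G_A(f)\|_{B(X)}\le C\omega^{-\eta}\|f\|_{\infty,\omega}$ is precisely what makes this limiting procedure (approximating $\phi_j$ by holomorphic regularizations on shrinking sectors) converge. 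The same remark applies to your ``uniform $H^\infty$-bound on a fixed sector'' for the signed sums $\sum_{j\in F}\varepsilon_j\phi_j$: in the extended calculus the correct statement is a uniform bound in the Mihlin norm, which is where the required smoothness of the bumps (a number of derivatives tied to $\eta$) enters. Without this extension the construction of $\mathcal{P}$ does not get off the ground.

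Two smaller slips in the last step. Your normalization $\tilde\phi_j(t)=2^{-js}t^s\phi_j(t)$ yields the exact identity $P_jA^sz=2^{js}G_A(\tilde\phi_j)z$ rather than $2^{js}P_jz=G_A(\tilde\phi_j)A^sz$; for the latter you need $\psi_j(t)=2^{js}t^{-s}\phi_j(t)$, which is the equibounded family to which the square-function estimate should be applied. Moreover $\psi_0$ (and $t\mapsto t^s\phi_0(t)$ near $0$, for the converse inequality) is singular at the origin, so the term $j=0$ must be handled separately, e.g.\ by working with $1+A$; this is also why the resulting norm is equivalent to the full graph norm \eqref{interp} and not merely to $\|A^sz\|_X$.
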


\begin{Example}\label{ex:PL}

 Let $\Omega$ be a bounded domain of $\R^n$ ($n\geq 2$) with a
$C^{1,1}$-boundary and $X=L^q(\Omega;\C)$ for $1<q<+\infty$.

\begin{enumerate}

\item[\textup{(i)}]{\bf Dirichlet boundary condition}.   If we define
$A u:=-\Delta u $ for all $u\in D(A )$ with $D(A):=H^2(\Omega;\C)\cap \mathring{H^1}(\Omega;\C)$, which corresponds to Dirichlet boundary condition, then $A: D(A)\subset L^2(\Omega;\C)\to L^2(\Omega;\C)$ is a self-adjoint operator with $0\in \rho(A)$ and {$-A$} generates a strongly continuous semigroup $\{e^{-At}\}_{t\geq 0}$, whose kernel $\{p_t\}_{t\in (0,+\infty)}$ satisfies the following Gaussian upper bound estimate:
\beq\label{heat}
|p_t(x,y)|\lesssim \f{1}{t^{\f{n}{2}}}\exp(-c\f{|x-y|^2}{t})\quad \forall (t,x,y)\in (0,+\infty)\times \Omega\times \Omega,
\eeq
 for some $c>0$
 (see e.g. \cite[Theorem 6.10]{ouhabaz2009analysis} and \cite[Chapter 7]{ouhabaz2009analysis}).    We can extend  $\{e^{-At}\}_{t\geq 0}$ to  a strongly continuous semigroup on $L^q(\Omega;\C)$, whose  generator is denoted by $-A_{q}:D(A_{q})\subset L^q(\Omega;\C)\to L^q(\Omega;\C)$. Then, $A_{q}:D(A_{q})\subset L^q(\Omega;\C)\to L^q(\Omega;\C)$ is a 0-sectorial operator with bounded $\M^\eta$-calculus  for $\eta>\lfloor \f{n}{2}\rfloor+1 $ (see e.g. \cite[Lemma 6.1]{kriegler2016paley}  and \cite[Theorem 7.23]{ouhabaz2009analysis}), where $\lfloor c\rfloor$ denotes the largest integer smaller  than $c$.
   Therefore,  according to Lemma \ref{lemma:m}, $L^q(\Omega;\C)$ admits an LP decomposition $\mathcal{P}_D$  and
$$
F_q^\theta(\mathcal{P}_D)= D(A^\theta_{q})=\begin{cases} H^{2\theta}_q(\Omega;\C)\, &0\leq\theta<\f{1}{2q},\\
\{H^{2\theta}_q(\Omega;\C)\mid \gamma u=0\}\, &1\geq \theta>\f{1}{2q}\,\text{and}\,\theta\neq  \f{q+1}{2q},
\end{cases}
$$
 where  we have used the characterization of $D(A_{q}^\theta)$ from  \cite[Theorem 16.15]{Yagi}.

\item[\textup{(ii)}]{\bf Neumann boundary condition}. Let us now consider  $A u:=-\Delta u+u$ for $u\in D(A)$, where
 $D(A):=\{ H^2(\Omega;\C)\mid \f{\p u}{\p n}=0 \,\text{on}\, \partial\Omega\}$.
 Then, $A:D(A)\subset L^2(\Omega;\C)\to L^2(\Omega;\C)$ is also a self-adjoint operator with $0\in \rho(A)$ and hence, { $-A$}  generates a strongly continuous semigroup $(e^{-At})_{t\geq 0}$.  Its kernel also satisfies the  classical Gaussian upper estimate \eqref{heat} (\cite[Theorem 6.10]{ouhabaz2009analysis} and \cite[Chapter 7]{ouhabaz2009analysis}).  As in the first case, $(e^{-At})_{t\geq 0}$ can be extended to a strongly continuous semigroup on
 $L^q(\Omega;\C)$ with the generator denoted by
$-A_{q}:D(A_{q})\subset L^q(\Omega;\C)\to L^q(\Omega;\C)$.  Again, thanks to \cite[Lemma 6.1]{kriegler2016paley}  and \cite[Theorem 7.23]{ouhabaz2009analysis},  $A_{q}: D(A_{q})\subset L^q(\Omega;\C)\to L^q(\Omega;\C)$ is  a 0-sectorial operator over $L^q(\Omega;\C)$ with bounded $\M^\eta$-calculus  for  $\eta>\lfloor \f{n}{2}\rfloor+1 $. Therefore, by Lemma  \ref{lemma:m},  $L^q(\Omega;\C)$  admits an LP
 decomposition $\mathcal{P}_N$  such that
\beq\label{projection:N}
F_q^\theta(\mathcal{P}_N)=D(A^\theta_{q})=\begin{cases} H^{2\theta}_q(\Omega;\C)\ &0\leq\theta<\f{q+1}{2q},\\
\{H^{2\theta}_q(\Omega;\C)\mid \f{\p u}{\p n}=0\}\ &1\geq \theta>\f{q+1}{2q},
\end{cases}
\eeq
where  we have used the characterization of $D(A_{q}^\theta)$ from \cite[Theorem 16.11]{Yagi}.
\end{enumerate}
\end{Example}
Further examples     for sectorial operators   with bounded $\M^\eta$-calculus can be found in  \cite[Lemma 6.1]{kriegler2016paley}.

\section{Sufficient conditions for VSC in $L^p(\Omega,\mu)$}

In all what follows, let $(\Omega,\mu)$ be a $\sigma$-finite measure,     $1<p<+\infty$, $q:=\f{p}{p-1}$, and $\hat p := \max \{p,2\}$.  It is well-known that the real  Lebesgue space  $L^p(\Omega, \mu)$ is $\hat p$-uniformly convex (see e.g. \cite{xu1991inequalities}),  and  there exists a constant $c_p>0$  such that
\beq\label{eq:uni}
\|w+y\|_{p}^{\hat p}\geq \|w\|^{\hat p}_{p}+\hat p \langle y, J_{\hat p} (w)\rangle_{p,q}+c_p\|y\|^{\hat p}_{p} \quad \forall w,y \in  L^p(\Omega, \mu),
\eeq
 where $J_{\hat p} : L^p(\Omega, \mu) \to L^q(\Omega, \mu)$ denotes the generalized duality map  (cf. \cite{xu1991inequalities}) satisfying
\begin{equation}\label{general}
\la w,J_{\hat p}(w) \ra_{p,q}=\|w\|_p^{\hat p}\quad \textup{and}\quad \|J_{\hat p}(w)\|_q=\|w\|^{\hat{p}-1}_p.
\end{equation}
 Given  a $x^*$-minimum norm  solution $x^\dag \in D(T) \subset L^p(\Omega,\mu)$  to the ill-posed operator equation \eqref{op}, i.e.,
$$
\|x^\dag-x^*\|_{p}=\min\{\|x-x^*\|_{p}\mid x\in D(T)\,\text{such that} \, T(x)=y\},
$$
our goal is to find a constant $\beta\in (0,c_{p})$ and a concave index function $\Psi:(0,+\infty)\to (0,+\infty)$  such that  the following VSC
 \beq\label{VSCp}
 \langle x^\dag-x, J_{\hat p}(x^\dag-x^*) \rangle_{p,q}
\leq \f{c_{p}-\beta}{\hat p}\|x-\xdag\|^{\hat{p}}_{p}+\Psi(\|T(x)-T(\xdag)\|_Y)\quad \forall\,x\in D(T)
 \eeq
 holds true.  Note that  a function $\Psi:(0,+\infty)\to(0,+\infty)$ is called an index function if it is continuous, strictly increasing and satisfies the limit condition $\lim_{\delta\to 0^+}\Psi(\delta)=0$.

 \begin{Remark}   \normalfont Inserting $y= x-\xdag$ and $w=x^\dag-x^*$ in   \eqref{eq:uni}, we immediately obtain that
 $$
\langle x^\dag-x,  J_{\hat p}(x^\dag-x^*) \rangle_{p,q}\geq \f{1}{\hat p}
\left(\|\xdag-x^*\|^{\hat p}_{p}-\|x-x^*\|^{\hat p}_{p}+c_p\|x-\xdag\|_{p}^{\hat p}\right)   \quad  \forall x\in D(T).
$$
Therefore,  \eqref{VSCp} implies that
\begin{equation}\label{VSC}
\f{\beta}{\hat p}\|x^\dag-x\|^{\hat p}_p\leq \f{1}{\hat p}\|x-x^*\|_p^{\hat p}-\f{1}{\hat p}\|x^\dag-x^*\|^{\hat p}_p+\Psi(\|T(x)-T(x^\dag)\|_Y)\quad \forall x\in {D(T)}.
\end{equation}


 \end{Remark}

 VSC of the   type  \eqref{VSC} has been proposed in \cite{HofMat12,schuster2012regularization}. Thus, as \eqref{VSCp} implies   \eqref{VSC},
 the following convergence rate  result  follows directly from  \cite[Theorem 1]{HofMat12} and \cite[Theorem 4.13]{schuster2012regularization}):

 \begin{Corollary} \label{corconv}
Suppose that VSC \eqref{VSCp} holds true for some $\beta\in (0,c_{p})$ and  concave  index function $\Psi:(0,+\infty)\to (0,+\infty)$. If
the regularization parameter   in \eqref{Tik} is chosen as  $\alpha(\delta):=\f{\delta^\ell}{\Psi(\delta)}$, then every solution $x^\delta_{\alpha(\delta)} \in D(T)$ to \eqref{Tik}  satisfies
\beq\label{eq:regular:conv}
\|x^\delta_{\alpha(\delta)}-x^\dag\|_p^{\hat p}=O(\Psi(\delta))\quad \text{as}\,\,\delta\to 0^+.
\eeq
\end{Corollary}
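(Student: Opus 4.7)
The plan is to reduce the generalized VSC \eqref{VSCp} to the standard form \eqref{VSC} already used in the Hofmann--Math\'e framework, and then invoke the known convergence-rate results of \cite{HofMat12,schuster2012regularization} directly. The reduction is performed in the Remark preceding the corollary: substituting $w=x^\dag-x^*$ and $y=x-x^\dag$ into the $\hat p$-uniform convexity inequality \eqref{eq:uni} yields the announced lower bound for $\langle x^\dag-x,J_{\hat p}(x^\dag-x^*)\rangle_{p,q}$ which, combined with \eqref{VSCp}, produces \eqref{VSC}. Hence it suffices to derive \eqref{eq:regular:conv} from \eqref{VSC} together with the a priori parameter choice $\alpha(\delta)=\delta^\ell/\Psi(\delta)$.

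Next I would exploit the defining minimization property of $x^\delta_{\alpha(\delta)}$ tested against the admissible competitor $x^\dag\in D(T)$. Since $T(x^\dag)=y$ and $\|y-y^\delta\|_Y\leq \delta$, this yields the comparison estimate
\[
\tfrac{1}{\ell}\|T(x^\delta_{\alpha(\delta)})-y^\delta\|_Y^\ell+\tfrac{\alpha(\delta)}{p}\|x^\delta_{\alpha(\delta)}-x^*\|_p^{\hat p}\leq \tfrac{\delta^\ell}{\ell}+\tfrac{\alpha(\delta)}{p}\|x^\dag-x^*\|_p^{\hat p},
\]
so the quantity $\|x^\delta_{\alpha(\delta)}-x^*\|_p^{\hat p}-\|x^\dag-x^*\|_p^{\hat p}$ that appears on the right-hand side of \eqref{VSC} is controlled by data-discrepancy terms scaled by $\alpha(\delta)^{-1}$. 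Combining this with \eqref{VSC} evaluated at $x=x^\delta_{\alpha(\delta)}$, and using the triangle inequality $\|T(x^\delta_{\alpha(\delta)})-T(x^\dag)\|_Y\leq \|T(x^\delta_{\alpha(\delta)})-y^\delta\|_Y+\delta$ together with the monotonicity of $\Psi$, I would obtain a single inequality for $\|x^\dag-x^\delta_{\alpha(\delta)}\|_p^{\hat p}$ involving $\|T(x^\delta_{\alpha(\delta)})-y^\delta\|_Y^\ell/\alpha(\delta)$ and $\Psi(\|T(x^\delta_{\alpha(\delta)})-y^\delta\|_Y+\delta)$.

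The conclusion then follows from a Young-type inequality based on the concavity of $\Psi$ and its Fenchel conjugate: the mixed term $\Psi(\|T(x^\delta_{\alpha(\delta)})-y^\delta\|_Y+\delta)$ splits into a piece absorbable into $\|T(x^\delta_{\alpha(\delta)})-y^\delta\|_Y^\ell/\alpha(\delta)$ on the left and a remainder proportional to $\Psi(\delta)$. The parameter choice $\alpha(\delta)=\delta^\ell/\Psi(\delta)$ then balances $\delta^\ell/\alpha(\delta)$ with $\Psi(\delta)$ and delivers the announced rate $\|x^\delta_{\alpha(\delta)}-x^\dag\|_p^{\hat p}=O(\Psi(\delta))$.

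The principal obstacle is making the Fenchel/Young step precise for a general concave index function $\Psi$ in the Banach-space setting; this is exactly the bookkeeping carried out in \cite[Theorem 1]{HofMat12} and \cite[Theorem 4.13]{schuster2012regularization}, where the balancing parameter choice emerges naturally. Once that step is invoked, no further ingredient is required beyond the two elementary manipulations above, which is why the authors describe the result as a direct consequence.
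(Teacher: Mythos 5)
Your proposal is correct and follows essentially the same route as the paper: the reduction of \eqref{VSCp} to the standard form \eqref{VSC} via the uniform-convexity inequality \eqref{eq:uni} is exactly the content of the Remark preceding the corollary, and the paper then likewise treats the rate as an immediate consequence of \cite[Theorem 1]{HofMat12} and \cite[Theorem 4.13]{schuster2012regularization} under the a priori choice $\alpha(\delta)=\delta^\ell/\Psi(\delta)$. Your additional sketch of the minimizer comparison and the Fenchel/Young balancing step accurately reflects the mechanism inside those cited results, so nothing is missing.
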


Let us now state our main assumption on the existence of an LP decomposition for the dual space  of $L^p(\Omega, \mu;\C)$:

\begin{enumerate}

\item[{\bf (H0)}]  $L^q(\Omega,\mu;\C)$ admits an LP decomposition $\mathcal{P}=\{P_j\}_{j=0}^\infty\subset B( L^q(\Omega,\mu;\C))$ in the sense of  Definition \ref{def:LP}.
\end{enumerate}

If {\bf (H0)} holds, then for every $\theta\geq 0$, we can construct a Banach space $F^{\theta}_q:=F^{\theta}_q(\mathcal{P})$  by
\eqref{Fspace}.  Since the embedding $F^\theta_q\embed L^q(\Omega,\mu;\C)$ is dense and continuous,   the embedding $L^p(\Omega,\mu;\C)\embed (F^\theta_{q})^*$ is continuous, and therefore
\beq\label{eq:pair}
|\la f,g\ra_{p,q}| \le   \|f\|_{(F^\theta_q)^*}\|g\|_{F^\theta_q}\quad \forall \, (f,g)\in  L^p(\Omega, \mu; \C) \times F^\theta_q.
\eeq

\begin{Theorem}\label{the:vsc} Let $(\Omega,\mu)$ be a $\sigma$-finite measure,     $1<p<+\infty$, and $q=\f{p}{p-1}$ satisfying
  {\bf (H0)}. Suppose that   there exist a concave index function $\Psi_0: (0,+\infty)\to (0,+\infty)$ and a constant  $\theta\geq 0$ such that
\beq\label{eq:cond}
\|\xdag-x\|_{(F^\theta_{q})^*} \lesssim \Psi_0(\|T(\xdag)-T(x)\|_Y) \quad \forall \, x\in D(T).
\eeq
Moreover, assume that  $f^\dag:=J_{\hat p}(\xdag-x^*)$ is nonzero and belongs to $F^{s\theta}_{q}$  for some $0< s\leq 1$. Then, VSC \eqref{VSCp} holds true for  $\beta = \frac{c_p}{2}$ and a   concave index function $\Psi:(0,+\infty)\to (0,+\infty)$, defined by
\begin{equation}\label{def:psitheo}
\Psi(\delta):=
\begin{cases}
C\|f^\dag\|_{  F_q^\theta }\Psi_0(\delta) \quad \text{if}\, \,s=1,\\
\displaystyle
C\inf_{\lambda\geq 1}\left[\f{1}{2^{\lambda \hat q s}}\|f^\dag\|_{F^{s\theta}_q}^{\hat q}  +2^{(\lambda+1)(1-s) }\|f^\dag\|_{F^{s\theta}_{q}}\Psi_0(\delta)\right]  \, \text{if}\, \,s\in (0,1),
\end{cases}
\end{equation}
for all sufficiently large      $C>0$ and $\hat{q}:=\min\{q,2\}$. Furthermore, the index function \eqref{def:psitheo} satisfies
\beq\label{psidecay}
\Psi(\delta)\lesssim   \Psi_0(\delta)^{\f{\hat qs}{1+(\hat q-1)s}}
\eeq
 for all sufficiently small $\delta>0$.
\end{Theorem}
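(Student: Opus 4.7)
The plan is a low--high frequency decomposition of $f^\dag := J_{\hat p}(x^\dag - x^*)$ via the Littlewood--Paley family $\{P_j\}_{j=0}^\infty$ supplied by \textbf{(H0)}. For a truncation index $N\in\mathbb{N}$ to be chosen later, set $f_N := \sum_{j=0}^{N-1} P_j f^\dag$ and $f^N := f^\dag - f_N$. Combining the pairing inequality \eqref{eq:pair} with H\"older's inequality gives
\begin{equation*}
\la x^\dag-x,\, f^\dag\ra_{p,q}
\leq \|x^\dag-x\|_{(F^\theta_{q})^*}\|f_N\|_{F^\theta_{q}} + \|x^\dag-x\|_p\|f^N\|_q.
\end{equation*}
The first product is controlled by $\|f_N\|_{F^\theta_q}\Psi_0(\|T(x^\dag)-T(x)\|_Y)$ via the stability estimate \eqref{eq:cond}, while the second is absorbed into $\tfrac{c_p-\beta}{\hat p}\|x-x^\dag\|_p^{\hat p} + C_\beta\|f^N\|_q^{\hat q}$ by Young's inequality with conjugate exponents $(\hat p,\hat q)$ (recall $\hat q = \hat p/(\hat p-1)$), setting $\beta := c_p/2$.

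The core technical step is the pair of Jackson--Bernstein-type estimates
\begin{equation*}
\|f_N\|_{F^\theta_{q}} \lesssim 2^{(N+1)\theta(1-s)}\|f^\dag\|_{F^{s\theta}_{q}}, \qquad
\|f^N\|_q \lesssim 2^{-N s\theta}\|f^\dag\|_{F^{s\theta}_{q}}.
\end{equation*}
Both follow from the almost orthogonality \eqref{almost} and the identity \eqref{pj}: since $P_kP_j$ is nonzero only when $|k-j|\leq 1$, $P_k f_N$ vanishes for $k\geq N+1$ and coincides with $P_k f^\dag$ for $k\leq N-2$, with controlled boundary contributions at $k\in\{N-1,N\}$. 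Factoring $2^{2k\theta} = 2^{2k\theta(1-s)}\cdot 2^{2ks\theta}$ and using $k\leq N+1$ in the weighted $\ell^2$-sum representation of $\|\cdot\|_{F^\theta_q}$ produces the first estimate; the dual argument, using $k\geq N-1$ together with the norm equivalence \eqref{decomp:A}, produces the second. Inserting these into the decomposition of $\la x^\dag-x,f^\dag\ra_{p,q}$, applying Young's inequality, and taking the infimum over $N$ under the rescaling $\lambda := N\theta$ recovers \eqref{def:psitheo} for $s\in(0,1)$. The case $s=1$ requires no splitting: $f^\dag\in F^\theta_q$ combined with \eqref{eq:pair} and \eqref{eq:cond} yields $\la x^\dag-x,f^\dag\ra_{p,q}\leq C\|f^\dag\|_{F^\theta_q}\Psi_0(\|T(x^\dag)-T(x)\|_Y)$ directly.

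It remains to verify that $\Psi$ is a concave index function and to derive \eqref{psidecay}. For each $\lambda\geq 1$ the bracketed expression in \eqref{def:psitheo} is affine and non-decreasing in the variable $\Psi_0(\delta)$ with positive coefficients; hence the infimum is concave and non-decreasing in $\Psi_0(\delta)$, and composing with the concave non-decreasing index function $\Psi_0$ preserves these properties, while $\inf_{\lambda\geq 1} 2^{-\lambda\hat q s}\|f^\dag\|^{\hat q}_{F^{s\theta}_q} = 0$ forces $\Psi(\delta)\to 0$ as $\delta\to 0^+$. For \eqref{psidecay}, balancing the two terms in the infimum yields the optimal cutoff $2^{\lambda^*(\hat q s + 1 - s)} \approx \|f^\dag\|^{\hat q - 1}_{F^{s\theta}_q}/\Psi_0(\delta)$, and substitution produces the exponent $\hat q s/(1+(\hat q - 1)s)$. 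The principal obstacle I anticipate is the clean handling of the boundary indices $k\in\{N-1,N\}$ in the almost-orthogonality argument, since $f_N$ is not the orthogonal projection onto the first $N$ dyadic blocks and terms such as $P_{N-1}P_N f^\dag$ must be controlled via the uniform boundedness of $\{P_j\}_{j=0}^\infty$ and \eqref{pj} alone; this is precisely what produces the shift from $\lambda$ to $\lambda+1$ in \eqref{def:psitheo}, and the mismatch between the discrete level $N\in\mathbb{N}$ and the continuous parameter $\lambda\geq 1$ is absorbed into the constant $C$.
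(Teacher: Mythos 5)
Your proposal is correct and follows essentially the same route as the paper: the same low/high-frequency splitting $f^\dag=\mathscr{P}_\lambda f^\dag+\mathscr{Q}_\lambda f^\dag$, the same H\"older--Young absorption of the high-frequency pairing into $\tfrac{c_p}{2\hat p}\|x-x^\dag\|_p^{\hat p}$, the same two Jackson/Bernstein-type bounds \eqref{eq:rbound0} and \eqref{eq:p}, and the same balancing of the two terms to obtain \eqref{psidecay}. The only (cosmetic) difference is that you control the finitely many boundary cross-terms such as $P_{N-1}P_N f^\dag$ inside the square function by uniform boundedness and a pointwise triangle inequality, whereas the paper invokes the $\Rs$-boundedness of the finite set $\{P_{\lfloor\lambda\rfloor+1},P_{\lfloor\lambda\rfloor+1}+P_{\lfloor\lambda\rfloor+2},\textup{id}\}$, which for finite sets reduces to the same uniform bound via Proposition \ref{pro:R}.
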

\begin{Remark}    \normalfont
\label{remfortheo}

~\begin{enumerate}

\item[\textup{(i)}]  If $f^\dag$ is zero, then $x^*=x^\dag$, i.e.,   the a priori guess $x^*$ is exactly the  true solution $x^\dag$. In this case, VSC \eqref{VSCp} holds true for all $\beta\in (0,c_p)$ and all index functions $\Psi$.

\item[\textup{(ii)}]  The condition \eqref{eq:cond} characterizes the local ill-posedness of
the forward operator $T:L^p(\Omega,\mu) \supset D(T)  \to Y$ at $\xdag$. As the topology of $(F^\theta_q)^*$ becomes  coarser  for growing $\theta$, i.e., $(F^{\theta_1}_q)^* \subset (F^{\theta_2}_q)^* $ holds for any $0 \le \theta_1 < \theta_2$,  the ill-posedness grows if $\theta$ becomes larger. On the other hand,   if $\theta=0$, then $(F^{0}_q)^*= L^p(\Omega,\mu;\C)$, and \eqref{eq:cond} implies the local well-posedness at $\xdag$ in the  following sense:
\[
 \{x_n\}_{n\in \mathbb{N}}\subset D(T) \quad \textrm{and} \quad  \lim_{n\to \infty}T(x_n) = T(\xdag)  \text{ in }  Y \implies  \lim_{n\to \infty }x_n =  \xdag   \text{ in }  L^p(\Omega,\mu).
\]
\vspace{-0.6cm}
\item[\textup{(iii)}]
The existence of a concave index function $\Psi_0$ satisfying
 \eqref{eq:cond}  can be obtained by conditional stability estimates, including H\"{o}lder/Lipschitz-type estimates and logarithmic type estimates, for the corresponding inverse problem \eqref{op} related to the forward operator $T:  L^p(\Omega,\mu) \supset D(T)   \to Y$.   The claim for the case of $\theta=0$ can be found in \cite[Theorem 4.26]{schuster2012regularization}.  In this case,  the assumption {\bf (H0)} is not required, and    \eqref{VSCp} holds   for all  $\beta\in (0,c_p)$ and  $\Psi=C \|f^\dag\|_{  F_q^\theta } \Psi_0$ for all sufficiently large   $C>0$.

  \item[\textup{(iv)}] We underline that the regularity condition $f^\dag:=J_{\hat p}(\xdag-x^*)\in F^{s\theta}_{q}$     is not a source condition. This regularity requirement along with  {\bf (H0)} and   the stability estimate \eqref{eq:cond} yield that  VSC \eqref{VSCp} is satisfied for the index function \eqref{def:psitheo}. No other smoothness conditions are needed.

 \end{enumerate}
\end{Remark}
\begin{proof}

If $s=1$ or $\theta=0$, then     \eqref{eq:pair} and \eqref{eq:cond} imply
that
\beq\label{case0}
\la \xdag-x, J_{\hat p}(x^\dag-x^*) \ra_{p,q}\le     \|\xdag-x\|_{ (F^\theta_{q})^* }\|f^\dag\|_{  F^\theta_{q}}\le  \|f^\dag\|_{  F_q^\theta }  \Psi_0(\|T(\xdag)-T(x)\|_Y)
\eeq
holds true  for all $x\in D(T)$. Therefore, if $s=1$ or $\theta=0$, VSC \eqref{VSCp} is satisfied for all $\beta \in (0,c_p)$ and   $\Psi(\delta)= \|f^\dag\|_{  F_q^\theta }\Psi_0(\delta)$.

We now prove the claim for  $0<s<1$ and $\theta>0$.  To this aim, let   $x\in D(T)$ be arbitrarily fixed.
For any fixed $\lambda\geq 1$, we introduce
$$  \mathscr{P}_{\lambda} z:=\sum_{k = 0}^{\lfloor \lambda \rfloor} P_k z \quad \forall z \in L^q(\Omega, \mu) \quad  \textrm{and} \quad
\mathscr{Q}_{\lambda}:=I-\Pr_{\lambda},
$$
where we recall that $\lfloor \lambda \rfloor \in \mathbb N$ denotes the largest integer satisfying $\lfloor \lambda \rfloor \le \lambda$. Then,
\beq\label{decomp}
\la \xdag-x, f^\dag \ra_{p,q}
=\la \xdag-x, \Qr_\lambda f^\dag \ra_{p,q}+\la \xdag-x, \Pr_\lambda f^\dag \ra_{p,q}=:{\bf I_1}+{\bf I_2}.
\eeq
Let us  first derive a proper  estimate for     ${\bf I_1}$.  Since $\hat{p}=\max\{2,p\}$ and $\hat{q}=\min\{q,2\} =\f{\hat{p}}{\hat{p}-1}$,  Young's inequality implies that
\begin{align}\label{est0}
{\bf I_1}&\leq \| \xdag-x\|_{p}\|\Qr_\lambda f^\dag\|_{q}
\leq \f{c_p}{2\hat{p}} \| \xdag-x\|_{p}^{\hat{p}}+ \f{1}{\hat q}\left(\f{2}{c_p}\right)^{\hat q-1}\|\Qr_\lambda f^\dag\|_{q}^{\hat{q}}.
\end{align}
 Next, in view of   the almost orthogonality \eqref{almost} and  the   partition of identity \eqref{partition}, it holds for all $z\in L^q(\Omega,\mu)$ that
 \beq\label{proj0}
  P_j\Qr_{\lambda } z=
  P_j  \! \!\! \!\sum_{k= \lfloor \lambda \rfloor+1}^{\infty}  \! \!\! \! P_k z= \begin{cases}
 P_j z,\quad  j\geq { \lfloor \lambda \rfloor+2},\\
P_{ \lfloor \lambda \rfloor+1} (P_{ \lfloor \lambda \rfloor+1}+P_{ \lfloor \lambda \rfloor+2}) z  \quad j= \lfloor \lambda \rfloor+1,\\
P_{ \lfloor \lambda \rfloor} P_{ \lfloor \lambda \rfloor+1}z\quad j= \lfloor \lambda \rfloor,\\
 0,\quad  j\leq { \lfloor \lambda \rfloor}-1.
 \end{cases}
 \eeq
By \eqref{decomp:A}, \eqref{proj0}, and the fact that $\{P_j\}_{j=0}^\infty$ is pairwisely commutative, we obtain that
\begin{align}\label{est1}
\frac{1}{c^*}\|\Qr_\lambda f^\dag\|_{q}
\leq & \|(\sum_{j=0}^\infty|P_j \Qr_\lambda f^\dag|^2)^{\f 1 2}\|_{q}\\
=& \|(|P_{ \lfloor \lambda \rfloor+1}P_{ \lfloor \lambda \rfloor} f^\dag|^2+|(P_{ \lfloor \lambda \rfloor+1}+P_{ \lfloor \lambda \rfloor+2}) P_{ \lfloor \lambda \rfloor+1} f^\dag|^2+\sum_{j=  { \lfloor \lambda \rfloor}+2}^\infty |P_j f^\dag|^2)^{\f 1 2}\|_{q}\notag.
\end{align}
From Proposition \ref{pro:R}, it follows that the finite set $\{ P_{ \lfloor \lambda \rfloor+1},P_{ \lfloor \lambda \rfloor+1}+P_{ \lfloor \lambda \rfloor+2},\textup{id}\}$ is $\Rs$-bounded with
\begin{align*}
\Rs(\{ P_{ \lfloor \lambda \rfloor+1},P_{ \lfloor \lambda \rfloor+1}+P_{ \lfloor \lambda \rfloor+2},I\})\leq&
\Rs(\{P_{ \lfloor \lambda \rfloor+1}\})+\Rs(\{P_{ \lfloor \lambda \rfloor+1}+P_{ \lfloor \lambda \rfloor+2}\})+\Rs(\{\textup{id}\})\\
\leq& C_R:=1+3C_G\sup_{j\geq 0}\|P_j\|_{B(L^q(\Omega,\mu;\C))}.
\end{align*}
Let now  $N\in \mathbb{N}$ be arbitrarily fixed with $N> {\lfloor \lambda \rfloor}$. According to the definition of the $\mathcal R$-boundedness (see Definition \ref{def:R}),
by choosing
  \begin{align*}
n:=N-\lfloor \lambda \rfloor+1, \quad T_1:=P_{\lfloor \lambda \rfloor+1}, \quad
T_2:=P_{\lfloor \lambda \rfloor+1}+P_{\lfloor \lambda \rfloor+2},\quad
T_k:=  \textup{id} \quad \forall k=3,\ldots, n,
\end{align*}
and $z_k:=P_{\lfloor \lambda \rfloor +k-1} f^\dag$ for all $k=1,\ldots n$  in \eqref{def:R0}, we obtain
\begin{align*}
&\|(|P_{\lfloor \lambda \rfloor+1}P_{\lfloor \lambda \rfloor} f^\dag|^2+|(P_{ \lfloor \lambda \rfloor+1}+P_{\lfloor \lambda \rfloor+2}) P_{\lfloor \lambda \rfloor+1} f^\dag|^2+ \!\!\!\! \sum_{j=  {\lfloor \lambda \rfloor}+2}^N \!\!\!\!\! |P_j f^\dag|^2)^{\f 1 2}\|_{q}
\leq C_R \|(\sum_{j= \lfloor \lambda \rfloor }^N |P_j f^\dag|^2)^{\f 1 2}\|_{q}.
\end{align*}
Since $N$ was chosen   arbitrarily,   it follows that
\begin{align}\label{eq:rbound0}
&\|(|P_{\lfloor \lambda \rfloor+1}P_{\lfloor \lambda \rfloor} f^\dag|^2+|(P_{ \lfloor \lambda \rfloor+1}+P_{\lfloor \lambda \rfloor+2}) P_{\lfloor \lambda \rfloor+1} f^\dag|^2+\sum_{j={\lfloor \lambda \rfloor}+2}^\infty  |P_j f^\dag|^2)^{\f 1 2}\|_{q}   \\
\leq& C_R \|(\sum_{j=\lfloor \lambda \rfloor }^\infty |P_j f^\dag|^2)^{\f 1 2}\|_{q}
\leq \f{C_R}{2^{(\lambda-1) s \theta  }}\|(\sum_{j=0}^\infty 2^{2js\theta }|P_j f^\dag|^2)^{\f 1 2}\|_{q}
=   \f{2^{s\theta }C_R}{2^{\lambda s\theta}}\|f^\dag\|_{F^{s\theta}_q},\notag
\end{align}
where we have used the definition \eqref{Fspace} for the last identity.  Combinig \eqref{est0} and \eqref{est1}-\eqref{eq:rbound0}   results in
\beq\label{est2}
{\bf I_1}
\leq \f{c_p}{2\hat p} \| \xdag-x\|_{p}^{\hat{p}}+  \f{C}{2^{\lambda \hat q s\theta }}\|f^\dag\|_{F^{s\theta}_{q}}^{\hat{q}} \quad \forall \, x\in D(T),
\eeq
for some $C>0$, depending only on $c^*$, $c_p$, $\hat{q}$, $s$, $\theta$ and $C_R$.

Next, we     estimate   the second term   ${\bf I_2}$ by applying \eqref{eq:pair} and \eqref{eq:cond} to \eqref{decomp}:
\beq\label{est3}
{\bf I_2} = \la \xdag-x, \Pr_\lambda f^\dag \ra_{p,q} \lesssim  \|\Pr_\lambda f^\dag\|_{  F^{\theta}_{q}}  \Psi_0(\|T(\xdag)-T(x)\|_Y).
\eeq
Let us now derive an appropriate upper bound for  $\|\Pr_\lambda f^\dag\|_{ F^\theta_{q}}$.
Similar to \eqref{proj0}, invoking    the almost orthogonality \eqref{almost} and  the   partition of identity \eqref{partition}, we deduce   that
\beq\label{proj1}
 P_j\Pr_{\lambda} z= P_j\sum_{k=0}^{\lfloor \lambda \rfloor} P_k z=
 \begin{cases}
  0,\quad &j\geq \lfloor \lambda \rfloor+2,\\
  P_{\lfloor \lambda \rfloor+1} P_{\lfloor \lambda \rfloor}  z, \quad &j={\lfloor \lambda \rfloor}+1,\\
P_{\lfloor \lambda \rfloor}   (P_{\lfloor \lambda \rfloor}+P_{\lfloor \lambda \rfloor-1}) z\quad &j={\lfloor \lambda \rfloor},\\
  P_jz,  \quad  &j\leq \lfloor \lambda \rfloor -1  \\
 \end{cases} \quad
\eeq
holds true for all $z\in L^q(\Omega,\mu)$.
Since the finite set
$\{  P_{\lfloor \lambda \rfloor},P_{\lfloor \lambda \rfloor}+P_{\lfloor \lambda \rfloor-1},\textup{id}\}$ is $\Rs$-bounded with $\Rs(\{  P_{\lfloor \lambda \rfloor},P_{\lfloor \lambda \rfloor}+P_{\lfloor \lambda \rfloor-1},\textup{id}\})\leq 1+3C_G\sup_{j\geq 0}\|P_j\|_{B(L^q(\Omega,\mu;\C))}=C_R$,  using \eqref{proj1} and analogous arguments for \eqref{eq:rbound0}, we infer   that
\begin{align}\label{eq:p}
&\|\Pr_\lambda f^\dag\|_{F^{\theta}_q}
=\|(\sum_{j=0}^\infty 2^{2j\theta }|P_j \Pr_\lambda  f^\dag |^2)^{\f 1 2}\|_{q}\\
=&\|(\sum_{j=0}^{\lfloor \lambda\rfloor-1} 2^{2j\theta}|P_j   f^\dag|^2+ 2^{2\lfloor \lambda \rfloor\theta }|(P_{\lfloor \lambda \rfloor}+P_{\lfloor \lambda \rfloor-1})P_{\lfloor \lambda \rfloor}f^\dag|^2+  2^{2(\lfloor \lambda \rfloor+1)\theta }|P_{\lfloor \lambda \rfloor} P_{\lfloor \lambda \rfloor+1} f^\dag|^2
)^{\f 1 2}\|_{q} \notag\\\notag
\leq & C_R \|(\sum_{j=0}^{\lfloor \lambda \rfloor+1} 2^{2j\theta}|P_j   f^\dag|^2)^{\f 1 2}\|_{q}
\leq C_R 2^{(\lambda+1)\theta(1-s) } \|(\sum_{j=0 }^{\lfloor \lambda \rfloor+1} 2^{2s j\theta }|P_j   f^\dag|^2)^{\f 1 2}\|_{q}
\leq  C_R 2^{(\lambda+1)\theta(1-s) } \|f^\dag\|_{F^{s\theta}_{q}}.
\end{align}
Applying \eqref{eq:p}  to \eqref{est3} leads to
\beq\label{est4}
{\bf I_2} \lesssim  2^{(\lambda+1) \theta (1-s) } \|f^\dag\|_{F^{s\theta}_{q}}\Psi_0(\|T(\xdag)-T(x)\|_Y).
\eeq
Finally,   combining \eqref{decomp}, \eqref{est2}, and \eqref{est4} together, we arrive at
\begin{align*}
&\la \xdag-x,  f^\dag\ra_{p,q}\leq \f{c_p}{2\hat p} \| \xdag-x\|_{p}^{\hat{p}}+  \\
&C\inf_{\lambda\geq 1}\left(\f{1}{2^{\lambda \hat q s\theta }}\|f^\dag\|_{F^{s\theta}_{q}}^{\hat{q}}  +2^{(\lambda+1)\theta(1-s) }\|f^\dag\|_{F^{s\theta}_q}\Psi_0(\|T(\xdag)-T(x)\|_Y)\right) \quad \forall\,x\in D(T),
\end{align*}
for  all sufficiently large  $C>0$, independent of $x$. The function $\Psi:(0,\infty)\to (0,\infty)$ defined by
\beq\label{def:psi}
\Psi(\delta):=C\inf_{\lambda\geq 1}\left(\f{1}{2^{\lambda \hat q s\theta }}\|f^\dag\|_{F^{s\theta}_{q}}^{\hat{q}}  +2^{(\lambda+1)\theta(1-s) }\|f^\dag\|_{F^{s\theta}_q}\Psi_0(\delta)\right)
\eeq
is   concave, continuous and strictly increasing   (cf. the proof of \cite[Theorem 4.3]{chen2018variational}). In conclusion,  VSC \eqref{VSCp} holds true for $\beta = \frac{c_p}{2}$ and  the concave index function \eqref{def:psi} for all sufficiently large $C>0$.

Eventually,  {since $s, \hat{q}, \theta$ are fixed} and $\lim_{\delta \to 0}\Psi_0(\delta)=0$,  if $\delta$ is small enough,  there exists $\lambda_0\geq 1$ such that
${\f{1}{2^{\lambda_0\theta}}}=\Psi_0(\delta)^{\f{1}{1+(\hat q-1)s}}$, which implies that
$(\f{1}{2^{\lambda_0 \theta }})^{\hat q s}=
\Psi_0(\delta)^{\f{\hat qs}{1+(\hat q-1)s}}$ and
$ (\f{1}{2^{\lambda_0\theta}})^{s-1}\Psi_0(\delta)=\Psi_0(\delta)^{\f{s-1}{1+(\hat q-1)s}}\Psi_0(\delta)
=\Psi_0(\delta)^{\f{\hat qs}{1+(\hat q-1)s}}
$.
Therefore,  if $\delta$ is small enough, \eqref{def:psi} yields that
$$
\Psi(\delta) \lesssim \f{1}{2^{\lambda_0 \hat q s\theta }}\|f^\dag\|_{F^{s\theta}_{q}}^{\hat{q}}  +2^{(\lambda_0+1)\theta(1-s) }\|f^\dag\|_{F^{s\theta}_q}\Psi_0(\delta) =    (\|f^\dag\|_{F^s_q}^{\hat q}+2^{\theta(1-s) }\|f^\dag\|_{F^s_{q}})\Psi_0(\delta)^{\f{\hat qs}{1+(\hat q-1)s}}.
$$
This completes the proof.
\end{proof}
Let us close this section by presenting an exemplary  application of
Theorem \ref{the:vsc} with an   optimal convergence rate. We consider $p=q=2$ and
an unbounded, self-adjoint, and strictly  positive operator $A:D(A)\subset L^2(\Omega,\mu;\C)\to L^2(\Omega,\mu;\C)$.  By      the functional calculus for self-adjoint operator (see, e.g., \cite[Lemma 6.1(2)]{kriegler2016paley}), $A:D(A)\subset L^2(\Omega,\mu; \C)\to L^2(\Omega,\mu;\C)$ is a 0-sectorial operator with   $\mathcal{M}^\eta$-calculus for some $\eta>0$.  Therefore,  in view of  Lemma \ref{lemma:m}, $L^2(\Omega,\mu; \mathbb C)$ admits an LP-decomposition $\mathcal{P}=\{P_j\}_{j=0}^\infty$  such that
\beq\label{equvi:F}
F^\theta_2(\mathcal{P})=D(A^\theta)\quad \forall \, \theta\geq  0,
\eeq
and its dual space $F^\theta_2(\mathcal{P})^*$ coincides with  $D(A^{-\theta})$.  We suppose that the forward operator $T: D(T)\subset L^2(\Omega,\mu)\to L^2(\Omega,\mu)$  is   linear, and there exists  $\theta\ge0$ such that
$$
\|x\|_{D(A^{-\theta})}\lesssim \|T x\|_{ L^2(\Omega,\mu)} \quad \forall x\in D(T).
$$
Now, if $x^*=0$ and $\xdag\in D(A^{s\theta})$ for some $0<s\leq 1$,
then Theorem \ref{the:vsc}   yields that  VSC \eqref{VSCp} holds true for the index function
$$
\Psi(\delta)  :=     C \delta^{\f{2s}{1+ s}}
$$
for any sufficiently  large $   C>0$.  Eventually, Corollary \ref{corconv}  yields the following convergence rate
\begin{equation} \label{optimrate}
\|x^\delta_{\alpha(\delta)}-x^\dag\|_{L^2(\Omega,\mu)}=O(\delta^{\f{s}{1+s}})  \quad \text{as}\,\, \delta\to 0^+
\end{equation}
for the Tikhonov regularization method \eqref{Tik} with $\ell=2$ and the parameter choice $\alpha(\delta):=   C^{-1} \delta^{\f{2}{s+1}}
$.  It is well-known that the convergence rate \eqref{optimrate}   is optimal (see   \cite{Natterer} or   \cite[Theorem 1.1.]{Tautenhahn}).

\section{Parameter identification of elliptic equations with measure data in the $L^p$-setting}\label{sec:app}

Throughout this section, let $\Omega\subset \R^n$ ($n \ge 2$) be a bounded $C^{1,1}$ domain, and let $\kappa\in L^\infty(\Omega)$ be a real-valued function satisfying
\beq\label{eq:kappa}
0<\lambda_0\leq \kappa(x)\leq \Lambda \,\,\text{for a.e. }  x \in \Omega,
\eeq
with two positive real constants $\lambda_0<\Lambda$.  We consider the inverse problem of reconstructing  the possibly unbounded diffusion coefficient
$a:\Omega\to \R$  of  the following elliptic equation:
\beq\label{elliptic}
\begin{cases}
\nabla(\kappa\nabla u)+au=\mu_\Omega\quad &\text{in}\, \Omega, \\
\kappa\frac{\p u}{\p \nu}=\mu_\Gamma\quad &\text{on}\, \Gamma:=\partial \Omega,
\end{cases}
\eeq
where $\mu_\Omega$ and $\mu_\Gamma$  are     regular signed Borel  measures on
$\Omega$ and
$\Gamma$.

\begin{Definition}\label{def:ws}
Let $ \mu_\Omega+\mu_{\Gamma}=: \mu_{\overline \Omega} \in C(\overline \Omega)^*$ be a  regular signed Borel  measure on $\overline \Omega$. A function $u\in H_1^1(\Omega)$ is said to be a  weak solution of
\eqref{elliptic}  if $au\in L^1(\Omega)$ and
\beq
\int_\Omega \kappa \nabla u\cdot \nabla \varphi
+ au\varphi dx =\int_{\overline{\Omega}} \varphi d\mu_{\overline{\Omega}}\quad \forall \varphi\in C^\infty(\overline{\Omega}).
\eeq
\end{Definition}
 The well-posedness of \eqref{elliptic} requires the following {\rm ellipticity condition}:

\begin{enumerate}

\item[{\bf (EC)}$_m$]

 Let $p>n/2$ and suppose that  $a \in L^p (\Omega)$  is a   nonnegative function satisfying  \beq\label{ec}
 \int_\Omega(\kappa |\nabla \varphi|^2+ a|\varphi|^2)dx \geq
 m\|\varphi\|_{H^1_2(\Omega)}^2\quad \forall\,\varphi\in H^1_2(\Omega),
 \eeq
for some $m>0$.
\end{enumerate}
We note that Proposition  \ref{pro:sobolev} ((i) and (ii))  implies that  $H^1_2(\Omega)\embed L^{\f{2n}{n-2}}(\Omega)$, if $n\geq 3$, and $H^1_2(\Omega)\embed L^{s}(\Omega)$  for all $1<s<\infty$, if $n=2$. Thus, the requirement   $p>\f{n}{2}$  is reasonable  to ensure that the second term in the left hand side of \eqref{ec} is well-defined.

\begin{Theorem}[{\cite[Theorem 4]{alibert1997boundary}}]\label{the:well}
Let $p>\f{n}{2}$ and $a\in L^p(\Omega)$ satisfying {\bf (EC)}$_m$ for some $m>0$. Then, for every  $\mu_{\overline{\Omega}} \in C(\overline \Omega)^*$,  the elliptic problem \eqref{elliptic}  admits a unique weak solution $u \in H^1_\tau(\Omega)$  for all $1\leq \tau<\f{n}{n-1}$. Moreover, for every $1\leq \tau<\f{n}{n-1}$, there exists a constant $C(m,\tau)>0$, independent of  $a$, $\mu_{\overline{\Omega}}$, and $u$, such that
\beq\label{eq:est}
\|u\|_{H^1_\tau(\Omega)}\leq C(m,\tau)\|\mu_{\overline{\Omega}}\|_{M(\overline{\Omega})}.
\eeq
\end{Theorem}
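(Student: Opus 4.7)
The plan is to follow the classical Stampacchia-Boccardo-Gallou\"et strategy for elliptic problems with measure data, adapted to the present mixed distributed/boundary measure setting with an $L^p$ zero-order term. First, I would approximate the data: choose sequences $\{f_k\}\subset L^2(\Omega)$ and $\{g_k\}\subset L^2(\Gamma)$ (via convolution and restriction) with $\|f_k\|_{L^1(\Omega)}+\|g_k\|_{L^1(\Gamma)}\lesssim \|\mu_{\overline\Omega}\|_{M(\overline\Omega)}$ and $f_k\,dx+g_k\,d\sigma\rightharpoonup \mu_{\overline\Omega}$ narrowly in $C(\overline\Omega)^*$. Thanks to {\bf (EC)}$_m$, the bilinear form $b(u,v):=\int_\Omega \kappa\nabla u\cdot\nabla v+auv\,dx$ is continuous and coercive on $H^1_2(\Omega)$, so the Lax-Milgram theorem produces a unique $u_k\in H^1_2(\Omega)$ solving the corresponding smooth-data Neumann problem.

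The central step is a uniform bound on $u_k$ in $H^1_\tau(\Omega)$ for every $\tau\in [1,n/(n-1))$, obtained by the Boccardo-Gallou\"et truncation method. Testing the approximate equation with $T_j(u_k):=\max\{-j,\min\{u_k,j\}\}$ and using that $au_kT_j(u_k)\geq 0$ (because $a\geq 0$), one derives $\int_\Omega \kappa|\nabla T_j(u_k)|^2\,dx\leq Cj\|\mu_{\overline\Omega}\|_{M(\overline\Omega)}$. Combined with the Sobolev embedding of Proposition \ref{pro:sobolev}, this yields a Marcinkiewicz-type decay $|\{|u_k|>j\}|\lesssim j^{-n/(n-2)}$, and a standard layer-cake argument then produces the $H^1_\tau$-bound. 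Passing to the limit: by reflexivity of $H^1_\tau(\Omega)$ and compactness of $H^1_\tau\hookrightarrow L^\tau$ (Proposition \ref{pro:sobolev}), extract a subsequence with $u_k\rightharpoonup u$ in $H^1_\tau(\Omega)$ and $u_k\to u$ pointwise a.e. The Marcinkiewicz bound on $u_k$ together with $a\in L^p(\Omega)$ and $p>n/2$ gives $\{au_k\}$ bounded in $L^r(\Omega)$ for some $r>1$, hence equi-integrable, and Vitali's theorem yields $au_k\to au$ in $L^1(\Omega)$. Passing to the limit in the weak formulation recovers Definition \ref{def:ws}, and the estimate \eqref{eq:est} follows from the uniform bound by lower-semicontinuity.

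Uniqueness is handled by a duality argument: if $u$ solves the homogeneous problem ($\mu_{\overline\Omega}=0$), then for any $\psi\in L^{\tau/(\tau-1)}(\Omega)$ the adjoint equation $-\text{div}(\kappa\nabla w)+aw=\psi$ with homogeneous Neumann data admits, via Lax-Milgram combined with Meyers-type $W^{1,\tau/(\tau-1)}$-regularity on the $C^{1,1}$ domain, a solution $w\in H^1_{\tau/(\tau-1)}(\Omega)\hookrightarrow C(\overline\Omega)$. This $w$ is admissible as a test function after density, and the duality pairing then forces $\int_\Omega u\psi\,dx=0$, so $u=0$. The main obstacle is the uniform $H^1_\tau$-bound in the presence of the $L^p$ zero-order term: the sign property $au_k T_j(u_k)\geq 0$ is crucial to prevent the zero-order contribution from spoiling the Boccardo-Gallou\"et energy inequality, and the hypothesis $p>n/2$ is exactly what is needed to match the Marcinkiewicz summability of $u_k$ with the dual integrability of $a$, ensuring that $au$ lies in $L^1(\Omega)$ and that the weak formulation can be closed in the limit.
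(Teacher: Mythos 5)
The existence half of your argument is sound and is essentially the route of the cited reference \cite{alibert1997boundary} (the paper itself gives no proof, only the citation): approximation of $\mu_{\overline\Omega}$ by $L^2$ data with total-variation control, Lax--Milgram via {\bf (EC)}$_m$, the Boccardo--Gallou\"et truncation estimate, the resulting Marcinkiewicz bounds on $u_k$ and $\nabla u_k$, and the Vitali argument for $au_k\to au$ in $L^1(\Omega)$; the hypothesis $p>\frac{n}{2}$ enters exactly where you say it does. One small point you should make explicit: in the Neumann setting there is no Poincar\'e inequality, so the bound $\int_\Omega\kappa|\nabla T_j(u_k)|^2\,dx\le Cj\|\mu_{\overline\Omega}\|_{M(\overline\Omega)}$ alone does not launch the level-set iteration; you need the full $H^1_2$-norm of $T_j(u_k)$. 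This is recovered by applying {\bf (EC)}$_m$ to $\varphi=T_j(u_k)$ together with $a|T_j(u_k)|^2\le a\,u_kT_j(u_k)$, which gives $m\|T_j(u_k)\|^2_{H^1_2(\Omega)}\le Cj\|\mu_{\overline\Omega}\|_{M(\overline\Omega)}$ and then the Sobolev/layer-cake argument proceeds as you describe.

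The genuine gap is in uniqueness. Your duality argument requires the adjoint problem $-\nabla\cdot(\kappa\nabla w)+aw=\psi$ to have a solution $w\in H^1_{\tau^*}(\Omega)$ with $\tau^*=\tau/(\tau-1)>n$, and you propose to obtain this from ``Meyers-type regularity''. For coefficients that are merely bounded and measurable --- which is all that \eqref{eq:kappa} provides at the point where the theorem is stated --- Meyers' theorem yields only $H^1_{2+\epsilon}$ for a small $\epsilon>0$ depending on $\Lambda/\lambda_0$, not $H^1_{\tau^*}$ with $\tau^*>n$; for $n\ge 3$ the required exponent is out of reach, and in fact uniqueness of distributional solutions in $\bigcap_{\tau<n/(n-1)}H^1_\tau(\Omega)$ is known to fail for general $L^\infty$ coefficients (Serrin's counterexample), so no argument can close this step without either extra regularity of $\kappa$ or a stronger solution concept (entropy/renormalized solutions, or uniqueness within the class of solutions obtained by approximation). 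The step is fine for $n=2$, where $2+\epsilon>n$, and it becomes correct under the piecewise-continuity assumption {\bf (A)} introduced later in the paper, since then Lemma \ref{well:neu} and the results of \cite{elschner2007optimal} supply exactly the $H^1_{\tau^*}$ adjoint regularity you invoke (with a short bootstrap to absorb the unbounded zero-order coefficient $a\in L^p$); this is also the standing hypothesis of \cite{alibert1997boundary}. As written, however, the uniqueness argument does not follow from the stated assumptions alone.
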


\subsection{Existence and convergence} \label{sec: math pro}
In all what follows,  let   $\mu_{\overline{\Omega}} \in   C(\overline \Omega)^*$, $p>\f{n}{2}$, $\mathfrak{p}\geq p$, $\underline{a}>0$, $M>0$ be fixed  and
\beq\label{def:ds}
D(S):=\{a\in L^\pf(\Omega)\mid \|a\|_{L^\pf(\Omega)}\leq M   \  \textrm{and} \    \underline{a}  \le a (x)\,\text{ for a.e. } x \in \Omega\}.
  \eeq
 \begin{Lemma}\label{lemma:conv}
Suppose that $\{a_k\}_{k=1}^\infty \subset D(S)$, and,
for every $k \in \mathbb N$, let $u_k \in   H^1_\tau(\Omega)$ for all $1\leq \tau<\f{n}{n-1}$ denote the   unique weak solution to \eqref{elliptic} associated with $a_k$. Then,
$$
a_k \wto a\ \textrm{ weakly in} \ L^p(\Omega) \quad \Rightarrow   \quad u_k  \wto u  \textrm{ weakly in } H^1_\tau(\Omega) \textrm{ for all } 1 \le \tau< \frac{n}{n-1},
$$
where $u \in H^1_\tau(\Omega)$    is the unique weak solution to \eqref{elliptic} associated with $a \in D(S)$.
\end{Lemma}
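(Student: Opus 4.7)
The strategy is to combine the uniform a priori estimate of Theorem \ref{the:well} with a compact Sobolev embedding that handles the product $a_k u_k$ in the weak formulation. The hypothesis $p>n/2$ enters through the equivalent condition $p':=p/(p-1)<n/(n-2)$ (with $n/(n-2):=+\infty$ when $n=2$), which unlocks the compact embedding used below.

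First I would verify that $a\in D(S)$. The pointwise bound $a\ge \underline a$ is inherited from the weakly closed convex set $\{f\in L^p(\Omega):\,f\ge \underline a\text{ a.e.}\}$. Since $\{a_k\}$ is bounded in $L^\pf(\Omega)$, a weakly (or weakly$^*$ if $\pf=\infty$) convergent subsequence in $L^\pf(\Omega)$ must have its limit agree with $a$ upon testing against $C_c^\infty(\Omega)$, and lower semicontinuity of the $L^\pf$-norm yields $\|a\|_{L^\pf}\le M$. Every element of $D(S)$ then satisfies {\bf (EC)}$_m$ with the common constant $m:=\min(\lambda_0,\underline a)$, so Theorem \ref{the:well} applies uniformly in $k$ and the limit solution $u$ is well-defined.

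Next, fix $\tau_0\in(1,n/(n-1))$ close enough to $n/(n-1)$ that $\tau_0 n/(n-\tau_0)>p'$; this is possible precisely because $p>n/2$, since $\tau n/(n-\tau)\to n/(n-2)$ as $\tau\to n/(n-1)^-$. The uniform estimate \eqref{eq:est} together with reflexivity of $H^1_{\tau_0}(\Omega)$ yields a (not relabeled) subsequence with $u_k\wto \tilde u$ in $H^1_{\tau_0}(\Omega)$, and Proposition \ref{pro:sobolev}(i) then supplies a compact embedding $H^1_{\tau_0}(\Omega)\embed L^{p'}(\Omega)$ that upgrades this to strong convergence $u_k\to \tilde u$ in $L^{p'}(\Omega)$.

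The crux is passing to the limit in $\int_\Omega a_k u_k\varphi\,dx$ for $\varphi\in C^\infty(\overline\Omega)$. Splitting
\[
\int_\Omega a_k u_k\varphi\,dx-\int_\Omega a\tilde u\varphi\,dx
=\int_\Omega a_k(u_k-\tilde u)\varphi\,dx+\int_\Omega (a_k-a)\tilde u\varphi\,dx,
\]
the first term vanishes by H\"older combined with the $L^p$-boundedness of $\{a_k\}$ (automatic on the bounded $\Omega$ from its $L^\pf$-boundedness) and the strong $L^{p'}$-convergence of $u_k$; the second vanishes by the weak $L^p$-convergence of $a_k$ paired against $\tilde u\varphi\in L^{p'}(\Omega)$. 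The gradient term converges trivially by weak $H^1_{\tau_0}$-convergence. Hence $\tilde u$ solves \eqref{elliptic} in the sense of Definition \ref{def:ws} for the coefficient $a$, with $a\tilde u\in L^1(\Omega)$ automatic from $a\in L^p$ and $\tilde u\in L^{p'}$. Uniqueness in Theorem \ref{the:well} forces $\tilde u=u$, and the standard subsequence principle promotes this to full-sequence weak convergence in $H^1_{\tau_0}(\Omega)$. For an arbitrary $\tau\in[1,n/(n-1))$, rerunning the argument with $\tau_0$ replaced by $\max(\tau,\tau_0)$ (which still satisfies the compact-embedding condition since $\tau\mapsto \tau n/(n-\tau)$ is increasing) followed by the continuous embedding $H^1_{\max(\tau,\tau_0)}(\Omega)\hookrightarrow H^1_\tau(\Omega)$ delivers the required weak convergence in $H^1_\tau(\Omega)$. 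I expect the only genuine obstacle to be the nonlinear product $a_k u_k$; the compact Sobolev embedding into $L^{p'}$ made available by $p>n/2$ is precisely what makes its passage to the limit work.
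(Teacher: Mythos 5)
Your proposal is correct and follows essentially the same route as the paper: the uniform bound from Theorem \ref{the:well}, weak subsequential compactness in $H^1_{\tau}(\Omega)$, the compact embedding $H^1_{\tau}(\Omega)\embed L^{p/(p-1)}(\Omega)$ for $\tau$ close enough to $n/(n-1)$ (available precisely because $p>n/2$), passage to the limit in the product term $a_ku_k$, identification of the limit via uniqueness, and the subsequence principle. Your explicit splitting of $\int_\Omega a_ku_k\varphi\,dx$ and the verification of the uniform ellipticity constant merely spell out steps the paper leaves implicit.
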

\begin{proof}
Suppose that the sequence $\{a_k\}_{k=1}^\infty \subset D(S)$ converges weakly in $L^p(\Omega)$ towards some element $a \in L^p(\Omega)$.
Since $D(S)$  is a weakly compact set in  $L^{\pf}(\Omega)$ and the embedding $L^{\pf}(\Omega)\embed L^p(\Omega)$ is continuous,  it follows that the set $D(S)$ is a weakly compact set  in $L^p(\Omega)$, which yields that $a\in D(S)$.   Furthermore, Theorem \ref{the:well} ensures that for every fixed  $1< \tau<\f{n}{n-1}$, there exists a subsequence $\{u_{k_m}\}_{m=1}^\infty \subset \{u_k\}_{k=1}^\infty$ weakly converging  in $H^1_{\tau}(\Omega)$  to some $u\in  H_\tau^1(\Omega)$.

Let us now fix a $\tau \in (\f{np}{n(p-1)+p}, \f{n}{n-1})$, which ensures that  $\f{n\tau }{n-\tau }>\f{p}{p-1}$. Then,
  Proposition \ref{pro:sobolev} (i)  implies that    the embedding
  $H^1_{\tau}(\Omega)\embed L^\f{p}{p-1}(\Omega)$ is compact.  For this reason, we obtain the strong convergence $u_{k_m}\to u$ in $L^\f{p}{p-1}(\Omega)$, which yields the weak convergence
$a_{k_m}u_{k_m}\wto a u$ in $L^1(\Omega)$. Thus,
for any $\varphi\in C^\infty(\overline{\Omega})$, we obtain that
$$
\int_\Omega \kappa\nabla u \cdot \nabla \varphi+a u\varphi dx=\lim_{m\to\infty}
\int_\Omega \kappa\nabla  u_{k_m}\cdot \nabla \varphi+a_{k_m}u_{k_m}  \varphi dx=\int_{\overline{\Omega}} \varphi d\mu_{\overline{\Omega}}.
$$
It follows therefore from Theorem \ref{the:well}   that $u$ is the unique weak solution to \eqref{elliptic}, and   so a well-known argument
implies that the whole sequence $\{u_k\}_{k=1}^\infty$  converges weakly  in $H^1_{\tau}(\Omega)$   towards  $u$.  Finally, as the embedding $H^1_\tau(\Omega) \embed H^1_{\tilde \tau}(\Omega) $ for any $\tilde \tau \in [1,\tau]$ is linear and bounded, we conclude that $\{u_k\}_{k=1}^\infty$  converges weakly  in $H^1_{\tau}(\Omega)$ for  all $1\le \tau < \frac{n}{n-1}$ towards  $u$.
\end{proof}

In view of Theorem \ref{the:well}, we    introduce  the solution operator $S: D(S) \subset L^p(\Omega)\to  Y$, $a\mapsto u$,  where $Y$ denotes a real Banach space satisfying    $   H^1_\tau(\Omega) \embed Y$  for some $1\leq \tau<\f{n}{n-1}$. More precisely,    the operator $S$  assigns to every coefficient $a\in D(S)$  the unique weak solution    of \eqref{elliptic}  $u \in H^1_\tau(\Omega)$  for all $1\leq \tau<\f{n}{n-1}$.
   Applying the solution operator,   the mathematical formulation of the elliptic inverse coefficient problem  \eqref{elliptic} reads as follows: Find $a \in D(S)$ such that
 \beq\label{eq:ip}
S(a)=u^\dag,
\eeq
where  $u^\dag \in H^1_\tau(\Omega)$ for all $1 \le \tau < \frac{n}{n-1}$ denotes the  unique weak solution of \eqref{elliptic} associated with  the  true coefficient $a^\dag\in D(S)$.  For our convergence analysis,  we assume that the (unknown) true solution
$a^\dag$  is the $L^p$-norm minimizing solution in the sense that $a^\dag\in D(S)$ solves
\beq\label{min:lp}
\|a^\dag-a^*\|_{L^p(\Omega)}=\min_{a\in \Pi({u^\dag})} \|a-a^*\|_{L^p(\Omega)} \quad \textrm{with }  \Pi(u^\dag):=\{a\in  D(S) \mid S(a)=u^\dag\}.
\eeq
\begin{Lemma}
The  nonempty set
$
\Pi(u^\dag)
$	
is  bounded, convex, and closed in $L^p(\Omega)$. Therefore,  the minimization problem \eqref{min:lp} admits a unique solution.
\end{Lemma}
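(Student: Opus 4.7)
The plan is to verify the four structural properties of $\Pi(u^\dag)$ in turn, and then invoke a standard direct-method argument exploiting the geometry of $L^p(\Omega)$. First, $\Pi(u^\dag)$ is nonempty by the setup of the inverse problem: the prescribed data $u^\dag$ arises as $S(a)$ for some true coefficient in $D(S)$, so at least one element solves $S(a)=u^\dag$. Boundedness in $L^p(\Omega)$ is immediate from $\Pi(u^\dag)\subset D(S)$, the $L^\pf$-ball bound built into \eqref{def:ds}, and the continuous embedding $L^\pf(\Omega)\embed L^p(\Omega)$ that follows from $\pf\geq p$ and the boundedness of $\Omega$.

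For convexity, I would fix $a_1,a_2\in\Pi(u^\dag)$ and $t\in[0,1]$. That $t a_1+(1-t)a_2\in D(S)$ is clear from the convexity of the pointwise constraint $a\geq\underline{a}$ and of the $L^\pf$-ball. To verify $S(ta_1+(1-t)a_2)=u^\dag$, I would exploit the affine dependence of the weak formulation in Definition \ref{def:ws} on $a$: forming $t$ times the identity satisfied by $u^\dag$ with coefficient $a_1$ plus $(1-t)$ times the identity with coefficient $a_2$ reproduces the weak formulation of \eqref{elliptic} with coefficient $ta_1+(1-t)a_2$ and solution $u^\dag$. The uniqueness clause of Theorem \ref{the:well}, applicable because every $a\in D(S)$ satisfies \textbf{(EC)}$_m$ with $m=\min\{\lambda_0,\underline{a}\}>0$, then forces $S(ta_1+(1-t)a_2)=u^\dag$.

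Closedness is where Lemma \ref{lemma:conv} does the work. Given $\{a_k\}\subset\Pi(u^\dag)$ converging strongly (hence weakly) to $a$ in $L^p(\Omega)$, Lemma \ref{lemma:conv} gives $a\in D(S)$ together with $S(a_k)\wto S(a)$ weakly in $H^1_\tau(\Omega)$ for every admissible $\tau$; since $S(a_k)=u^\dag$ along the sequence, the weak limit is still $u^\dag$, so $a\in\Pi(u^\dag)$. Observe that the same reasoning in fact yields weak sequential closedness in $L^p(\Omega)$, which is strictly more than the statement asks for and is exactly what is needed for the minimization step that follows.

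Finally, existence and uniqueness of the minimizer in \eqref{min:lp} follow from the geometry of $L^p(\Omega)$ with $1<p<\infty$, namely reflexivity and uniform convexity. For existence I would take a minimizing sequence in $\Pi(u^\dag)$, extract a weakly convergent subsequence using boundedness and reflexivity, place the weak limit inside $\Pi(u^\dag)$ by the weak sequential closedness established above, and invoke the weak lower semicontinuity of $\|\cdot-a^*\|_{L^p(\Omega)}$ to conclude that the limit attains the infimum. Uniqueness is a direct consequence of the strict convexity of the $L^p$-norm evaluated on the midpoint of two hypothetical minimizers, which still belongs to the convex set $\Pi(u^\dag)$. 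There is no substantive obstacle; the one point deserving care is ensuring that the $L^p$-weak convergence coming from a minimizing sequence triggers the $H^1_\tau$-weak convergence of the associated states, which is exactly what Lemma \ref{lemma:conv} guarantees.
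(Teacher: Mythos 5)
Your proposal is correct and follows essentially the same route as the paper: boundedness from the definition of $D(S)$ and the embedding $L^{\mathfrak{p}}(\Omega)\hookrightarrow L^p(\Omega)$, convexity from the affine dependence of the weak formulation on $a$ for the fixed state $u^\dag$, and closedness by passing to the limit in the variational identity. The only (harmless) difference is that for closedness you invoke the weak-to-weak continuity of Lemma \ref{lemma:conv} wholesale, whereas the paper redoes the shorter direct limit passage $a_k u^\dag \to a u^\dag$ in $L^1(\Omega)$ exploiting that $u^\dag$ is fixed; you also spell out the direct-method existence and strict-convexity uniqueness arguments that the paper leaves implicit.
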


\begin{proof}  The boundedness follows immediately from the definition of $D(S)$ (see \eqref{def:ds}) and $L^{\pf}(\Omega)\embed L^p(\Omega)$. Moreover,
by Definition \ref{def:ws},  it is straightforward to show that $\Pi(u^\dag)$ is   convex.    Let  us now prove that $\Pi(u^\dag) \subset L^p(\Omega)$ is closed. To this aim, let
$ \{a_k\}_{k=1}^{\infty}\subset\Pi(u^\dag)$ such that $  a_k\to a$ in $L^p(\Omega)$.   This weak limit satisfies  $a\in D(S)$ since $D(S) \subset L^p(\Omega)$ is   weakly compact  (cf. the proof of Lemma \ref{lemma:conv}). Furthermore,  as    the embedding
  $H^1_{\tau}(\Omega)\embed L^\f{p}{p-1}(\Omega)$ holds true for all $\f{np}{n(p-1)+p}<\tau <\f{n}{n-1}$ (cf. the proof of Lemma \ref{lemma:conv})  we obtain that  $u^\dag\in L^{\f{p}{p-1}}(\Omega)$, which implies that
$$
a_ku^\dag \to a u^\dag\quad \textup{in}\, L^1(\Omega),
$$
and consequently
$$
\int_{\Omega}\kappa\nabla u^\dag\cdot \nabla \varphi+ a u^\dag\varphi dx=\lim_{k\to\infty}
\int_{\Omega}\kappa\nabla u^\dag\cdot \nabla \varphi+ a_k  u^\dag\varphi dx
=\int_\Omega \varphi d\mu_{\overline{\Omega}}\quad\forall \,\varphi\in C^\infty(\overline{\Omega}).
$$
In conclusion, $a \in \Pi(u^\dag)$. This completes the proof.
\end{proof}

Now, given $\alpha>0$, the Tikhonov regularization problem associated with \eqref{eq:ip}  reads as
 \beq\label{mina}
\min_{a\in D(S)}\left(\f{1}{\ell}\| S(a)-u^\delta\|_Y^\ell+\f{\alpha}{\hat p}\|a-a^*\|_{{L^p(\Omega)}}^{\hat{p}}\right),
\eeq
for a fixed constant $\ell>1$, $\hat{p}=\max\{p,2\}$ and  an arbitrarily fixed a priori estimate $a^*\in L^p(\Omega)$ for  $a^\dag$.  Moreover, the noisy data
$u^\delta$ satisfy
$$
\|u^\dag-u^\delta\|_Y\leq \delta,
$$
with the noisy level $\delta>0$.   From
the classical theory of Tikhonov regularization (see e.g. \cite{hofmann2007convergence,schuster2012regularization}), the sequentially weak-to-weak continuity result (Lemma \ref{lemma:conv})   implies    the following existence and plain convergence results:

\begin{Theorem}  \label{tikop}The following assertions hold true:
\begin{enumerate}
\item[\textup{(i)}]For each $\alpha>0$ and $u^\delta\in Y$,   \eqref{mina} admits a solution $a^\delta_\alpha \in D(S)$.

\item[\textup{(ii)}] Let $\{\delta_k\}_{k=1}^\infty \subset (0,+\infty)$ be a  null sequence  and $\{u^{\delta_k}\}_{k=1}^\infty\subset Y$ be a sequence  satisfying
$$
\|u^{\delta_k}-u^\dag\|_Y\leq \delta_k\quad \forall\,k\in\mathbb{N}.
$$
Moreover, let $\{\alpha_k\}_{k=1}^\infty\subset (0,+\infty)$ fulfill
$$
\alpha_k\to 0,\,\,\f{\delta_k^\ell}{\alpha_k}\to 0,
$$
where $\ell\geq 1$ is as in \eqref{mina}.
If $a_k$ is a minimizer of \eqref{mina} with $u^\delta$ and $\alpha$ replaced by $u^{\delta_k}$ and $\alpha_k$,  respectively,  then $a_k$ converges strongly to  $a^\dag$ in $L^p(\Omega)$.
\end{enumerate}
\end{Theorem}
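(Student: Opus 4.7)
The plan is to follow the classical direct-method/Tikhonov convergence program, using only the two structural properties already collected in Section \ref{sec: math pro}: the $L^p$-weak compactness of $D(S)$ (established in the proof of Lemma \ref{lemma:conv}) and the sequential weak-to-weak continuity of $S:D(S)\subset L^p(\Om)\to Y$ (Lemma \ref{lemma:conv}, combined with the continuous embedding $H^1_\tau(\Om)\embed Y$). Throughout, I will also use that $L^p(\Om)$ is reflexive and uniformly convex for $1<p<\infty$, and that $\|\cdot\|_{L^p(\Omega)}$ and $\|\cdot\|_Y$ are weakly lower semicontinuous.

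For part (i), I would take a minimizing sequence $\{a_n\}\subset D(S)$ for the Tikhonov functional in \eqref{mina}. Since $D(S)$ is bounded in $L^\pf(\Om)$ and thus in $L^p(\Om)$, and is weakly closed (again by the argument in Lemma \ref{lemma:conv}), I can extract a subsequence $a_{n_k}\wto a^\delta_\alpha$ in $L^p(\Om)$ with $a^\delta_\alpha\in D(S)$. Lemma \ref{lemma:conv} gives $S(a_{n_k})\wto S(a^\delta_\alpha)$ weakly in $H^1_\tau(\Om)$, hence weakly in $Y$ by the assumed continuous embedding. Lower semicontinuity of $\|\cdot\|_Y$ and $\|\cdot-a^*\|_{L^p(\Om)}$ then produces
\[
\tfrac{1}{\ell}\|S(a^\delta_\alpha)-u^\delta\|_Y^\ell+\tfrac{\alpha}{\hat p}\|a^\delta_\alpha-a^*\|_{L^p(\Omega)}^{\hat p}\le \liminf_{k\to\infty}\Big(\tfrac{1}{\ell}\|S(a_{n_k})-u^\delta\|_Y^\ell+\tfrac{\alpha}{\hat p}\|a_{n_k}-a^*\|_{L^p(\Omega)}^{\hat p}\Big),
\]
so $a^\delta_\alpha$ is a minimizer.

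For part (ii), let $a_k$ be a minimizer corresponding to $(\delta_k,\alpha_k)$. Comparing the value at $a_k$ with the value at the admissible competitor $a^\dag\in D(S)$, and using $\|S(a^\dag)-u^{\delta_k}\|_Y=\|u^\dag-u^{\delta_k}\|_Y\le \delta_k$, I obtain
\[
\tfrac{1}{\ell}\|S(a_k)-u^{\delta_k}\|_Y^\ell+\tfrac{\alpha_k}{\hat p}\|a_k-a^*\|_{L^p(\Omega)}^{\hat p}\le \tfrac{\delta_k^\ell}{\ell}+\tfrac{\alpha_k}{\hat p}\|a^\dag-a^*\|_{L^p(\Omega)}^{\hat p}.
\]
From the parameter choice $\delta_k^\ell/\alpha_k\to 0$ this yields first $\|S(a_k)-u^{\delta_k}\|_Y\to 0$, hence $S(a_k)\to u^\dag$ in $Y$, and second
$\limsup_{k\to\infty}\|a_k-a^*\|_{L^p(\Omega)}\le \|a^\dag-a^*\|_{L^p(\Omega)}$. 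In particular $\{a_k\}$ is bounded in $L^p(\Om)$, so any subsequence has a further subsequence $a_{k_j}\wto \tilde a$ weakly in $L^p(\Om)$ with $\tilde a\in D(S)$. Lemma \ref{lemma:conv} and uniqueness of limits in $Y$ force $S(\tilde a)=u^\dag$, so $\tilde a\in \Pi(u^\dag)$. Weak lower semicontinuity combined with the $\limsup$ bound above gives $\|\tilde a-a^*\|_{L^p(\Omega)}\le \|a^\dag-a^*\|_{L^p(\Omega)}$, and the uniqueness of the $L^p$-minimum-norm solution \eqref{min:lp} identifies $\tilde a=a^\dag$; by a subsequence-subsequence argument, the full sequence satisfies $a_k\wto a^\dag$ in $L^p(\Om)$.

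The main subtlety, and the only step that needs more than bare weak convergence, is upgrading to strong $L^p$-convergence. Here I would chain the inequalities
\[
\|a^\dag-a^*\|_{L^p(\Omega)}\le \liminf_{k\to\infty}\|a_k-a^*\|_{L^p(\Omega)}\le \limsup_{k\to\infty}\|a_k-a^*\|_{L^p(\Omega)}\le \|a^\dag-a^*\|_{L^p(\Omega)},
\]
which delivers $\|a_k-a^*\|_{L^p(\Omega)}\to\|a^\dag-a^*\|_{L^p(\Omega)}$. Together with the weak convergence $a_k-a^*\wto a^\dag-a^*$, the Radon--Riesz property of the uniformly convex space $L^p(\Om)$ yields $a_k-a^*\to a^\dag-a^*$ strongly, and hence $a_k\to a^\dag$ in $L^p(\Om)$. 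I expect no genuine obstacle beyond carefully invoking uniform convexity at this last step; all other ingredients are direct consequences of Lemma \ref{lemma:conv} and the boundedness built into $D(S)$.
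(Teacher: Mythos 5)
Your proposal is correct and is precisely the classical Tikhonov existence/convergence argument that the paper invokes by reference (it cites the standard theory together with the weak-to-weak continuity of Lemma \ref{lemma:conv} rather than writing out the proof). All the ingredients you use — weak compactness and weak closedness of $D(S)$, weak lower semicontinuity, the comparison with the competitor $a^\dag$, uniqueness of the $L^p$-minimum-norm solution, and the Radon--Riesz upgrade to strong convergence — are exactly what the cited classical theory supplies, so no further comparison is needed.
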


\subsection{VSC for  \eqref{mina}}\label{sec:main}

Our goal is to verify VSC   for  the   Tikhonov regularization problem  \eqref{mina}. We  shall apply our abstract result (Theorem \ref{the:vsc}) to the case of $T=S$ and show that  the conditional estimate  \eqref{eq:cond} is satisfied.

\begin{Theorem}\label{first}
Let $\pf>\f{n}{2}$,    \beq\label{eq:tau}
\tau\in\begin{cases}\, (1,+\infty)\, &\text{if}\ \pf\geq n,\\
(\f{\pf n}{n\pf-n+\pf},\f{\pf n}{n-\pf })\, &\text{if}\ \f{n}{2}<\pf<n,
\end{cases} \eeq
and $1<r,q<+\infty $, $\gamma>0$ such that
\begin{enumerate}

\item[\textup{(a)}]   $u^\dag \in H^1_r(\Omega)$ and
$|u^\dag |\geq \gamma$ a.e. in $\Omega$;

\item[\textup{(b)}]   $S(a)-S(a^\dag)\in {H^1_\tau(\Omega)} $ for all $ a\in D(S)$;

\item[\textup{(c)}] \beq\label{ineq:qr}
1-\f{1}{\tau}=\f{1}{q}+\f{1}{r}.
\eeq
\end{enumerate}
Furthermore,     $p:=\f{q}{q-1}$,  $\hat{p}:=\max\{2,p\}$, $\hat{q}:=\min\{2,q\}$, and suppose that $J_{\hat p}(a^\dag-a^*) := f^\dag \in H^{s}_q(\Omega)$ for some
$s\in (0,1]$.  Then the following assertions hold true:

\begin{enumerate}

\item[\textup{(i)}]
There exists a  constant $C>0$ such that
\beq\label{key:est}
\|a-a^\dag\|_{H_q^1(\Omega)^*}\leq C\|S(a)-S(a^\dag)\|_{H^1_\tau(\Omega)}\quad \forall\, a\in D(S).
\eeq

 \item[\textup{(ii)}]

  If $\tau<\f{n}{n-1}$ and $Y=H_\tau^1(\Omega)$,  then VSC \eqref{VSCp} holds true for $T=S$,
    $\beta = \frac{c_p}{2}$, and $\Psi$ as in \eqref{def:psitheo} with $\theta=1$ and $\Psi_0(\delta)=\delta$.


 \item[\textup{(iii)}] If, in addition, there exist   $\overline\tau>\tau$ and $M_1>0$ such that
\beq\label{estover}
\|S(a)-S(a^\dag)\|_{H_{\overline\tau }^1(\Omega)}\leq M_1 \quad \,\,\forall \, a\in D(S),
\eeq
  and $Y=H_1^1(\Omega)$, then VSC  \eqref{VSCp} holds true
 for $T=S$, $\beta = \frac{c_p}{2}$, and $\Psi$ as in \eqref{def:psitheo} with $\theta=1$ and $\Psi_0(\delta)=\delta^\f{\overline\tau-\tau}{\tau (\overline\tau-1)}$.

 \item[\textup{(iv)}] If there exists $M_2>0$ such that
\beq\label{estover2}
\|S(a)-S(a^\dag)\|_{H_{\tau }^2(\Omega)}\leq M_2 \quad \,\,\forall \, a\in D(S),
\eeq
 and $Y=L^\tau (\Omega)$, then   VSC  \eqref{VSCp} holds true
 for $T=S$, $\beta = \frac{c_p}{2}$, and $\Psi$ as in \eqref{def:psitheo} with $\theta=1$ and $\Psi_0(\delta)={\delta}^{\frac{1}{2}}$.

\end{enumerate}

\end{Theorem}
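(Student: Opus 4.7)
The plan is to establish the four parts in the stated order, since parts (ii)--(iv) reduce to Theorem~\ref{the:vsc} once the stability estimate of part~(i) is available. For part~(i), I would exploit the weak formulations of \eqref{elliptic} for $u = S(a)$ and $u^\dag = S(a^\dag)$: subtracting them and splitting $au - a^\dag u^\dag = a(u-u^\dag) + (a-a^\dag)u^\dag$ yields
\[
\int_\Omega (a-a^\dag)u^\dag \varphi\,dx = -\int_\Omega \kappa\nabla(u-u^\dag)\cdot\nabla\varphi\,dx - \int_\Omega a(u-u^\dag)\varphi\,dx
\]
for admissible test functions $\varphi$. To extract $\|a - a^\dag\|_{H^1_q(\Omega)^*}$, for an arbitrary $\psi \in H^1_q(\Omega)$ I would insert $\varphi := \psi/u^\dag$, which is admissible because $|u^\dag|\geq \gamma$ together with $u^\dag \in H^1_r(\Omega)$ and the product/composition rules of Proposition~\ref{pro:chain} applied to a smooth truncation of $t\mapsto 1/t$ away from zero.

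The first right-hand integral is then bounded using H\"older in $(\tau,\tau')$: expanding $\nabla(\psi/u^\dag) = \nabla\psi/u^\dag - \psi\nabla u^\dag/(u^\dag)^2$ and using $\|1/u^\dag\|_\infty\leq 1/\gamma$, the split closes precisely through condition~(c), namely $1/\tau' = 1/q + 1/r$, combined with $u^\dag\in H^1_r(\Omega)$. The second integral is bounded by $\|a\|_\pf \leq M$ times an $L^{\pf'}$-norm of $(u-u^\dag)\psi/u^\dag$, which I would control by a further H\"older split and the Sobolev embeddings of Proposition~\ref{pro:sobolev}(i)--(ii); the admissible range of $\tau$ in \eqref{eq:tau} is exactly what makes these embeddings close. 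Taking the supremum over $\|\psi\|_{H^1_q(\Omega)}\leq 1$ yields \eqref{key:est}.

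For part~(ii), I would apply Theorem~\ref{the:vsc} with $T = S$ and an LP decomposition $\mathcal{P}$ of $L^q(\Omega;\mathbb{C})$ coming from a $0$-sectorial operator with bounded $\M^\eta$-calculus whose fractional-power domain reproduces $H^1_q(\Omega)$ (concretely, a shifted square root of the Neumann Laplacian of Example~\ref{ex:PL}(ii), available through the functional calculus). Then $F^1_q(\mathcal{P}) \cong H^1_q(\Omega)$ and $(F^1_q(\mathcal{P}))^* \cong H^1_q(\Omega)^*$, so the hypothesis $f^\dag \in H^s_q(\Omega)$ reads as $f^\dag \in F^{s\theta}_q(\mathcal{P})$ with $\theta = 1$; combining this with \eqref{key:est} and $Y=H^1_\tau(\Omega)$ supplies \eqref{eq:cond} with $\Psi_0(\delta)=\delta$, and Theorem~\ref{the:vsc} gives (ii). For (iii) and (iv), the ambient norm on $Y$ is weaker than $\|\cdot\|_{H^1_\tau(\Omega)}$, and I would restore it by Proposition~\ref{pro:sobolev}(iii): interpolating between $H^1_1(\Omega)$ and $H^1_{\overline\tau}(\Omega)$ and using \eqref{estover} gives
\[
\|u-u^\dag\|_{H^1_\tau(\Omega)}\lesssim \|u-u^\dag\|_{H^1_1(\Omega)}^{(\overline\tau-\tau)/(\tau(\overline\tau-1))},
\]
while interpolating between $L^\tau(\Omega)$ and $H^2_\tau(\Omega)$ and using \eqref{estover2} gives $\|u-u^\dag\|_{H^1_\tau(\Omega)}\lesssim \|u-u^\dag\|_{L^\tau(\Omega)}^{1/2}$. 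Composing each with \eqref{key:est} produces the required $\Psi_0$, and Theorem~\ref{the:vsc} delivers the respective VSC.

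The main obstacle will be the exponent bookkeeping in part~(i): ensuring the admissibility of $\varphi=\psi/u^\dag$, the validity of each H\"older split, and the applicability of the Sobolev embeddings, all simultaneously within the constraint \eqref{eq:tau} and the identity~(c). The critical regime is $n/2<\pf<n$, where the endpoint conditions on $\tau$ force a tight three-factor H\"older inequality in $\int_\Omega a(u-u^\dag)\psi/u^\dag\,dx$; I would first treat $\pf\geq n$, where $\tau$ is essentially unconstrained above and the matching is transparent, and then verify that the lower endpoint $\pf n/(n\pf-n+\pf)$ in \eqref{eq:tau} is exactly what guarantees the critical Sobolev embedding in the remaining case.
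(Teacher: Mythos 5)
Your proposal follows essentially the same route as the paper's proof: part (i) is obtained from the subtracted weak formulations, tested (after a density argument) against $\varphi=g/u^\dag$, with the admissibility of this test function secured by the Lipschitz‑composition and product rules of Proposition \ref{pro:chain} together with condition (c), and with the $a(u-u^\dag)\varphi$ term closed by the same H\"older/Sobolev bookkeeping that the range \eqref{eq:tau} is designed for; parts (ii)--(iv) then follow from Theorem \ref{the:vsc} via the Neumann‑Laplacian LP decomposition of Example \ref{ex:PL}(ii) and the two interpolation inequalities you state, which are exactly those used in the paper (with the same exponents). The only cosmetic difference is that you realize $H^1_q(\Omega)$ as $F^1_q$ of the square root of the operator, whereas the paper identifies it as $F^{1/2}_q(\mathcal{P}_N)$ of the operator itself; both normalizations lead to the same index function.
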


\begin{Remark}\label{concremark}    \normalfont
~\begin{enumerate}
\item[\textup{(i)}]
The condition  {(a)} implies that $\Pi(u^\dag)=\{a^\dag\}$, and so the inverse problem \eqref{eq:ip} has a unique solution.  We note that the generalized duality map $J_{\hat p} : L^p(\Omega) \to L^q(\Omega)$ satisfies  $J_{\hat{p}}(w)(x)=\|w\|_p^{\hat p -p}|w(x)|^{p-2} w(x)$ if $w(x) \neq 0$ and $J_{\hat{p}}(w)(x)=0$ if $w(x)=0$   (see, e.g., \textup{ \cite[Section 1.1]{Barbu}}).  Therefore, the regularity assumption   $f^\dag:= J_{\hat p}(a^\dag-a^*)  \in H^{s}_q(\Omega)$ can be immediately translated to
  the difference between the true solution and the intial guess as follows:
 \beq\label{sp}
 \chi_{\omega} |a^\dag-a^*|^{p-2}(a^\dag-a^*)   \in H^{s}_q(\Omega)
  \eeq
where  $\chi_{\omega}$ denotes the characteristic function of $\omega:= \{x \in \Omega\, |\, a^\dag(x) \neq a^*(x) \}$.  {  Introducing
$$
\varphi_p(t):=\begin{cases}
|t|^{p-2} t \quad t\in \mathbb{R}\backslash \{0\}\\
0 \quad t=0
\end{cases}
\quad \textrm{and} \quad T_{\varphi_p} f:=\varphi_p\circ f,
$$
\eqref{sp} can be reformulated as
\beq\label{sp1}
T_{\varphi_p}(a^\dag-a^*)\in H^{s}_q(\Omega) \quad \iff \quad  a^\dag-a^*\in  T_{\varphi_p}^{-1}(H^{s}_q(\Omega))=T_{\varphi_p^{-1}}(H^{s}_q(\Omega)).
\eeq
In the literature, there exist numerous contributions to the superposition in  Lizorkin--Triebel spaces (see, e.g., \cite{BMW2008,BMW2010,Runst,Sickel}).
To characterize $T_{\varphi_p^{-1}}(H^{s}_q(\Omega))$, we first take advantage of  \cite[Theorem 3]{Sickel}. By an elementary calculation  (see Appendix), it holds that  
\beq\label{varphplus}
\varphi_{p}(\R)\subset \R,\quad |\varphi^{(l)}_{p}(t)|\lesssim |t|^{p-1-l},\,\, l=0,\cdots,N, \,\, 
\eeq
and 
\beq\label{varphplus2}
\sup_{t_0\neq t_1}\frac{|\varphi^{(N)}_{p}(t_1)-\varphi^{(N)}_{p}(t_0) |}{|t_1-t_0|^{\tau}}
<\infty,
\eeq
 where $N\in \mathbb{Z}$ and $0<\tau:=p-1-N\leq 1$.  Thus, Theorem 3  in \cite{Sickel} is applicable, and \cite[Theorem 3, (i)]{Sickel} yields that 
 $$
 H^s_q(\R^n)\subset T_{\varphi_p}^{-1}(H^{s}_q(\R^n)) \quad \text{if}\quad p>\max\{2,n\} \,\,\textup{and}\quad
 n/q<s<p-1. 
 $$  
 Then, from the definition of $H^{s}_q(\Omega)$ and the fact that $T_{\varphi_p}^{-1}(U)\mid_{\Omega}
=T_{\varphi_p}^{-1}(u)$ whenever $U\in H_q^s(\R^n)$ and $u=U\mid_{\Omega}$, we   conclude that 
$$
 H^s_q(\Omega)\subset T_{\varphi_p}^{-1}(H^{s}_q(\Omega)) \quad \text{if}\quad p>\max\{2,n\} \,\,\textup{and}\quad
 n/q<s<p-1. 
 $$  By \cite[Theorem 3, (ii)]{Sickel} along with the argument above, we obtain that 
 $$
   H_q^{s^*}(\Omega)\subset T_{\varphi_p}^{-1}(H_q^{p-1}(\Omega)) 
\subset T_{\varphi_p}^{-1}(H_q^{s}(\Omega))
   $$
   if $p>2$, $0<s<p-1$, and   $n(1-2/p)<s^*\leq \min\{1+n(1-2/p),n(1-1/p)\}$,  where we have used the inclusion 
   $H^{p-1}_q(\Omega)\subset H^s_q(\Omega)$ (\cite[Page 42]{Yagi}).   Let us finally consider the case   $1<p<2$, $0<s<p-1$, and $t>\frac{s}{p-1}$. In this case,  introducing  $\varphi_{p,+}(t):=|t|^{p-1}$ for $t\in \R$,   \cite[Section 5.4.4, Theorem 1]{Runst} and the embedding result   \cite[Section 2.2.1, Proposition 1]{Runst} yield
\beq\label{varphplus3}
T_{\varphi_{p,+}}(H^t_{p}(\Omega))\subset H^s_q(\Omega).  
\eeq
For the convenience  of the reader, we present a proof for \eqref{varphplus3} in   Appendix. For the above case,   if  $a^\dag-a^*\geq 0$ a.e. on $\Omega$, and $a^\dag-a^*\in H^t_{p}(\Omega)$, then  $T_{\varphi_p}(a^\dag-a^*)=T_{\varphi_{p,+}}(a^\dag-a^*)\in H^s_q(\Omega)$.}

\item[\textup{(ii)}]
 Theorem \ref{the:well} implies that  $S(a),S(a^\dag)\in H_\tau^1(\Omega)$ for all $1<\tau<\f{n}{n-1}$. Nevertheless,  we shall show in   Lemmas \ref{lemma:prior} and \ref{lemma:prior2}  that $S(a)-S(a^\dag)$   enjoys a higher regularity property,   depending  on the regularity of $\kappa$, such that the assumptions  \textnormal{(b)}, \eqref{estover} and \eqref{estover2} are reasonable.

 \item[\textup{(iii)}]   The second part of  the assumption  \textnormal{(a)}  can be verified  under additional  assumptions: Suppose that  $a^\dag\in L^\infty(\Omega)$,  $\mu_\Omega\in L^{h_1}(\Omega)$ with  $h_1>\frac{n}{2}$, and $\mu_\Gamma\in L^{h_2}(\Gamma) $    with $h_2>n-1$. This additional regularity assumption implies that $u^\dag \in H^1_2(\Omega)\cap C(\overline{\Omega})$ (see \cite[Theorem 2]{alibert1997boundary}).
 Suppose further that $\Omega$ is convex and  piecewise smooth,  $\mu_\Omega \ge 0$ a.e. in $\Omega$,  and
 $\mu_\Gamma \geq \hat c$ a.e. on $\p\Omega$ for some constant $\hat c >0$. Let    $\hat u\in H^1_2(\Omega)\cap C(\overline{\Omega})$ denote the weak solution to the Neumann  problem
 $$
 -\nabla\cdot (\kappa \nabla \hat u)+  a^\dagger \hat u=0\ \text{in}\ \Omega\quad \text{and} \quad \kappa \f{\p \hat u}{\p \nu}=\hat c\ \text{on}\ \Gamma.
 $$
We note again that the continuity of $\hat u$ follows from   \cite[Theorem 2]{alibert1997boundary}. Now, according to \textup{\cite[Theorems 2 and 4]{Le}}, there exists a constant $\gamma>0$ such that $\hat u(x) \ge \gamma$ for all $x \in \overline \Omega$.   Letting $w:=u^\dag- \hat u$, we obtain   that
 \begin{align*}
 \int_{\Omega} \kappa \nabla w\cdot \nabla \varphi+ a^\dag w\varphi dx-\int_{\Gamma} \kappa (\mu_\Gamma -\hat c) \varphi ds=\int_\Omega  \mu_\Omega \varphi dx  \quad \forall \varphi\in H^{1}_2(\Omega),
 \end{align*}
from which it follows that
  \begin{align} \label{iqfinal}
 \int_{\Omega} \kappa \nabla w\cdot \nabla \varphi+ a^\dag w\varphi dx=\int_\Omega \mu_\Omega \varphi dx
 +\int_{\Gamma}   \kappa (\mu_\Gamma -\hat c)  \varphi ds
 \geq 0
 \end{align}
  for all non-negative functions $\varphi\in H^{1}_2(\Omega)$.   By   \textup{\cite[Theorem 2]{Le}}, it follows  from \eqref{iqfinal}  that $w(x) \ge 0$ holds for all $x \in \overline \Omega$. As a consequence, we obtain that
 $u^\dag(x) \ge   \hat u(x) \ge \gamma$   for all  $x \in \overline \Omega$.
\end{enumerate}
\end{Remark}
\begin{proof}
(i)
Let $\tau^*$ denote the conjugate exponent of $\tau$, i.e., $\tau^*=\f{\tau}{\tau-1}$.
 For each $a\in D(S)$, by the definition of the weak solution, we have
$$
\int_{\Omega} \kappa\nabla(S(a)-S(a^\dag))\cdot \nabla \varphi+ a(S(a)-S(a^\dag))\varphi dx\notag
=\int_{\Omega} (a^\dag-a)S(a^\dag)\varphi dx \quad\forall \varphi\in C^\infty(\overline{\Omega}).
$$	
Then, in view of  \eqref{eq:kappa} and H\"{o}lder's inequality, it follows that
\begin{align}\label{inner0}
\left|\int_{\Omega} (a^\dag-a)S(a^\dag)\varphi dx\right|
\leq& \Lambda\|\nabla(S(a)-S(a^\dag))\|_{\tau}\|\nabla \varphi\|_{\tau^*}
+\left|\int_{\Omega} a(S(a)-S(a^\dag))\varphi dx\right|\notag\\
=:& \Lambda\|\nabla(S(a)-S(a^\dag))\|_{\tau}\|\nabla \varphi\|_{\tau^*}+{\bf J}.
\end{align}
{
By the defiiniton of $D(S) \subset L^\pf(\Omega)$ (see \eqref{def:ds}),  we have
\beq\label{eq:J0}
 {\bf J} \le \|a\|_{\frak p}\|S(a)\varphi-S(a^\dag)\varphi\|_{\f{\pf}{\pf-1}} \le M \|S(a)\varphi-S(a^\dag)\varphi\|_{\f{\pf}{\pf-1}}    \  \forall\,(a,\varphi)\in D(S)\times C^\infty(\overline{\Omega}).
\eeq
Let us now prove the following estimate for  ${\bf J}$:
\beq\label{eq:J}
{\bf J} \lesssim  \|S(a)-S(a^\dag)\|_{H^1_\tau(\Omega)}\|\varphi\|_{H^1_{\tau^*}(\Omega)} \quad  \forall\,(a,\varphi)\in D(S)\times C^\infty(\overline{\Omega}).
\eeq
We first consider the case $1<\tau,\tau^*<n$, which is  only possible  for $n\geq 3$. For this case,  generalized H\"{o}lder's inequality and  Proposition \ref{pro:sobolev} (i)  yield that
\begin{align}\label{inner1}
\|S(a)\varphi-S(a^\dag)\varphi\|_{\f{\pf}{\pf-1}} \underbrace{\lesssim}_{\frak p > \f{n}{2}}\|(S(a)-S(a^\dag))\varphi\|_{\f{n}{n-2}}\leq \|S(a)-S(a^\dag)\|_{\f{n\tau}{n-\tau}}\|\varphi\|_{\f{n\tau^*}{n-\tau^*}}\notag\\
\lesssim \|S(a)-S(a^\dag)\|_{H^1_\tau(\Omega)}\|\varphi\|_{H^1_{\tau^*}(\Omega)} \quad  \forall\,(a,\varphi)\in D(S)\times C^\infty(\overline{\Omega}).
\end{align}
Applying \eqref{inner1} to  \eqref{eq:J0} yields  \eqref{eq:J}.    If $\tau\geq n$ and $\tau^*\geq n$,   both  $H^1_\tau(\Omega)$ and $H^1_{\tau^*}(\Omega)$ are   embedded to $L^{s}(\Omega)$ for all $1<s<+\infty$  (Proposition \ref{pro:sobolev} (ii)), and consequently  H\"{o}lder's inequality implies
\beq
\|S(a)\varphi-S(a^\dag)\varphi\|_{\f{\pf}{\pf-1}}\lesssim \|S(a)-S(a^\dag)\|_{H^1_\tau(\Omega)}\|\varphi\|_{H^1_{\tau^*}(\Omega)}\quad  \forall\,(a,\varphi)\in D(S)\times C^\infty(\overline{\Omega}),
\eeq
which yields  \eqref{eq:J}. Now suppose that $\tau\geq n$ and $\tau^*<n$.
From \eqref{eq:tau}, we obtain that
\beq\label{pn}
\f{1}{\pf}<\f{1}{\tau}+\f{1}{n} \quad \Rightarrow \quad 1-\f{1}{\pf}>1-\frac{1}{\tau}-\frac{1}{n} \quad \Rightarrow
\quad \f{\pf}{\pf-1}<\f{n\tau^*}{n-\tau^*}.
\eeq
Therefore,   in view of generalized H\"older's inequality and Proposition \ref{pro:sobolev}, we can choose $1<s<+\infty$ such that
\begin{align*}
\|S(a)\varphi-S(a^\dag)\varphi\|_{\f{\pf}{\pf-1}}\leq&  \|S(a)-S(a^\dag)\|_{s}\|\varphi\|_{\f{n\tau^*}{n-\tau^*}}
\lesssim \|S(a)-S(a^\dag)\|_{H^1_\tau(\Omega)}\|\varphi\|_{H^1_{\tau^*}(\Omega)},
\end{align*}
for all $(a,\varphi)\in D(S)\times C^\infty(\overline{\Omega})$. Thus, applying the above inequality to \eqref{eq:J0} gives \eqref{eq:J}.
Similarly,  \eqref{eq:J} is obtained for the  case of $\tau<n$ and $\tau^*\geq n$ as $
\f{\pf}{\pf-1}<\f{n\tau }{n-\tau}
$ is satisfied in this case.
}

Applying    \eqref{eq:J} to \eqref{inner0}, we obtain
\begin{align}\label{dualest}
\left|\int_{\Omega} (a^\dag-a)S(a^\dag)\varphi dx\right|\lesssim
\|S(a)-S(a^\dag)\|_{H^1_\tau(\Omega)}\|\varphi\|_{H^1_{\tau^*}(\Omega)}\, \forall\,(a,\varphi)\in D(S)\times H^1_{\tau^*}(\Omega),
\end{align}
where we have also used the density of $ C^\infty(\overline{\Omega})$ in  $H^1_{\tau^*}(\Omega)$ (cf. \cite{grisvard2011elliptic}).
On the other hand, we observe that
\begin{align}\label{dualob}
\|(a^\dag-a)\|_{H^1_{q}(\Omega)^*}=&\sup_{\|g\|_{H^1_q(\Omega)}=1}\left|\int_{\Omega} (a^\dag-a)g dx\right|
=\sup_{\|g\|_{H^1_q(\Omega)}=1}\left|\int_{\Omega} (a^\dag-a)S(a^\dag)\f{1}{S(a^\dag)} g dx\right|.
\end{align}
Now we show that   $\f{1}{S(a^\dag)}$ is well-defined in $H^1_{\tau^*}(\Omega)$. From  the condition (a), it follows that
$
\f{1}{S(a^\dag)}=F(S(a^\dag))
$
  holds true for a globally Lipschitz function $F:\mathbb{R}\to \mathbb{R}$ satisfying
$F(0)=0$ and $F(x)=\f{1}{x}$ for all $|x|\geq \gamma$. For this reason, Proposition \ref{pro:chain} (i) implies that $\f{1}{S(a^\dag)}\in H^{1}_{r}(\Omega)$. Furthermore, using Proposition \ref{pro:chain} (ii) and the condition (c), we  have
\beq\label{eq:g}
\left\|\f{1}{S(a^\dag)} g\right\|_{H^1_{\tau^*}(\Omega)}\leq C
\left\|\f{1}{S(a^\dag)}\right\|_{H^1_r(\Omega)}\|g\|_{H^1_q(\Omega)}\quad \forall g\in H_q^1(\Omega).
\eeq	
As a consequence of \eqref{dualest} and \eqref{dualob},
\beq\label{eq:amina}
\|(a^\dag-a)\|_{H^1_{q}(\Omega)^*}\lesssim  \sup_{\|g\|_{H^1_q(\Omega)}=1} \left(\|S(a)-S(a^\dag)\|_{H^1_\tau(\Omega)}\left\|\f{1}{S(a^\dag)} g \right\|_{H^1_{\tau^*}(\Omega)}\right)\  \forall a\in D(S).
\eeq
Then, applying \eqref{eq:g} to \eqref{eq:amina}, we conclude that \eqref{key:est} is valid.

(ii)
Let $\mathcal{P}_N$ be the PL decomposition as constructed in Example \ref{ex:PL} (ii). In view of \eqref{projection:N},    it holds that
\beq\label{eq:Fq}
F_q^t(\mathcal{P}_N)=H^{2t}_q(\Omega;\mathbb{C})  \quad \forall t\in [0,1/2] \quad \Rightarrow \quad F_q^{1/2}(\mathcal{P}_N)^*=(H^{1}_q(\Omega;\mathbb{C}))^*
\eeq
with equivalent norms.
Then,   \eqref{key:est} implies that \eqref{eq:cond} holds true for $T=S$, $Y=H_\tau^1(\Omega)$, $\Psi_0(\delta)= \delta$ and $\theta=1$. In conclusion, the claim (ii) follows from Theorem \ref{the:vsc}.

(iii)
Applying the interpolation inequality \eqref{interpolate}   with $s_1=s_2=1$, $\tau_1=\overline\tau$, $\tau_2=1$ and $\rho=\f{\overline\tau-\tau}{\tau (\overline\tau-1)}$   to  the right hand side of   \eqref{key:est} along with
\eqref{estover}, we obtain
\beq\label{key:est2}
\|a-a^\dag\|_{H_q^1(\Omega)^*}\leq CM_1^{1-\f{\overline\tau-\tau}{\tau (\overline\tau-1)}}\|S(a)-S(a^\dag)\|^{\f{\overline\tau-\tau}{\tau (\overline\tau-1)}}_{H^1_1(\Omega)}\quad \forall\, a\in D(S).
\eeq
 In view of \eqref{eq:Fq} and \eqref{key:est2},    we   see that \eqref{eq:cond} holds true for   $T=S$, $Y=H_1^1(\Omega)$, $\Psi_0(\delta)=\delta^ \f{\overline\tau-\tau}{\tau (\overline\tau-1)}$  and $\theta=1$. Thus,  by Theorem \ref{the:vsc},  the  claim (iii) is valid.

 (iv) Similarly, applying the interpolation inequality \eqref{interpolate}     with  $s_1=2,s_2=0,\tau_1=\tau_2=\tau$ and $\rho=1/2$
   to  the right hand side of    \eqref{key:est} together with
\eqref{estover2}, we have
\beq\label{key:est3}
\|a-a^\dag\|_{H_q^1(\Omega)^*}\leq CM_2^{\f 1 2}\|S(a)-S(a^\dag)\|^{\f 1 2}_{L^\tau (\Omega)}\quad \forall\, a\in D(S).
\eeq
In view of \eqref{eq:Fq} and \eqref{key:est3},   we   see that \eqref{eq:cond} holds true for $T=S$, $Y=L^\tau (\Omega)$, $\Psi_0(\delta)= \sqrt{\delta}$   and $\theta=1$.
In conclusion,  the claim (iv) follows from Theorem \ref{the:vsc}.
\end{proof}

The conditional stability estimate \eqref{key:est}  is the main key point to verify VSC  \eqref{VSCp} for $T=S$, as it implies the required condition \eqref{eq:cond} for Theorem \ref{the:vsc}.    Concluding from  Corollary \ref{corconv}, Theorem \ref{first}, and \eqref{psidecay}, we obtain the following convergence rates:
\begin{Corollary}\label{corfinal}
Under the assumptions of Theorem and  the parameter choice $\alpha(\delta):=\f{\delta^\ell}{\Psi(\delta)}$,  the  Tikhonov regularization method \eqref{mina} yields the convergence rates:
\begin{equation*}
\|a^\delta_{\alpha(\delta)}-a^\dag\|_p^{\hat p}=
\begin{cases}
O(\delta^{\f{\hat qs}{1+(\hat q-1)s}})\quad \text{as}\,\,\delta\to 0^+ \quad \textrm{in the case of \textup{(ii)} with $Y=H_\tau^1(\Omega)$}, \\
O(\delta^{\f{( \overline\tau-\tau) \hat qs}{\tau (\overline\tau-1)(1+(\hat q-1)s)}}
)\quad \text{as}\,\,\delta\to 0^+    \textrm{ in the case of \textup{(iii)} with $Y=H_1^1(\Omega)$}, \\
O(\delta^{\f{\hat q  s}{2+2(\hat q-1)s}}
)\quad \text{as}\,\,\delta\to 0^+  \quad \textrm{in the case of \textup{(iv)} with $Y=L^\tau(\Omega)$}.
\end{cases}
\end{equation*}
\end{Corollary}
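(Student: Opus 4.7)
The plan is to combine the three main ingredients already set up in the paper: Theorem \ref{first}, which delivers the variational source condition \eqref{VSCp} for $T=S$ in each of the three scenarios, together with the explicit form \eqref{def:psitheo} of the associated index function $\Psi$; Corollary \ref{corconv}, which converts \eqref{VSCp} into the rate $\|a^\delta_{\alpha(\delta)}-a^\dag\|_p^{\hat p}=O(\Psi(\delta))$ under the a priori parameter choice $\alpha(\delta)=\delta^\ell/\Psi(\delta)$; and the asymptotic decay estimate \eqref{psidecay}, which gives
\[
\Psi(\delta)\lesssim \Psi_0(\delta)^{\frac{\hat q s}{1+(\hat q-1)s}}
\]
for all sufficiently small $\delta>0$.

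First, I would fix one of the three scenarios (ii), (iii), or (iv) from Theorem \ref{first}. In each case, the assumptions of Theorem \ref{the:vsc} are verified with $\theta=1$, with the regularity exponent $s\in(0,1]$ coming from the hypothesis $f^\dag\in H^s_q(\Omega)$, and with a specific $\Psi_0$, namely $\Psi_0(\delta)=\delta$ in case (ii), $\Psi_0(\delta)=\delta^{(\overline\tau-\tau)/(\tau(\overline\tau-1))}$ in case (iii), and $\Psi_0(\delta)=\delta^{1/2}$ in case (iv). Thus VSC \eqref{VSCp} holds for $T=S$, $\beta=c_p/2$, and the index function $\Psi$ given by \eqref{def:psitheo}.

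Next, applying Corollary \ref{corconv} with this $\Psi$ and the indicated choice $\alpha(\delta)=\delta^\ell/\Psi(\delta)$ immediately yields the abstract rate
\[
\|a^\delta_{\alpha(\delta)}-a^\dag\|_p^{\hat p}=O(\Psi(\delta))\quad\text{as }\delta\to 0^+.
\]
To convert this into a concrete power of $\delta$, I would invoke the asymptotic bound \eqref{psidecay} from Theorem \ref{the:vsc}, substitute the specific $\Psi_0$ of the scenario under consideration, and simplify the exponent. In case (ii), this gives exponent $\hat qs/(1+(\hat q-1)s)$; in case (iii), the exponent $(\overline\tau-\tau)/(\tau(\overline\tau-1))$ multiplies through, producing $(\overline\tau-\tau)\hat qs/(\tau(\overline\tau-1)(1+(\hat q-1)s))$; in case (iv), the factor $1/2$ enters the numerator, producing $\hat qs/(2+2(\hat q-1)s)$. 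Repeating this for the remaining two scenarios completes the argument.

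Since every nontrivial analytical step, the verification of \eqref{eq:cond}, the construction of the LP decomposition, and the derivation of \eqref{psidecay}, has already been done in Theorem \ref{first} and Theorem \ref{the:vsc}, no real obstacle arises here. The only point that requires some care is bookkeeping: one must check that the exponent substitution is carried out correctly in case (iii), where $\Psi_0$ is itself a fractional power of $\delta$, so that the composition of exponents in \eqref{psidecay} indeed gives the stated rate. This is a purely algebraic verification.
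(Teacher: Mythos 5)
Your proposal is correct and follows exactly the paper's intended argument: Theorem \ref{first} supplies the VSC with the scenario-specific $\Psi_0$, Corollary \ref{corconv} gives the rate $O(\Psi(\delta))$ under the stated parameter choice, and \eqref{psidecay} converts this to the explicit powers of $\delta$ after substituting $\Psi_0(\delta)=\delta$, $\delta^{(\overline\tau-\tau)/(\tau(\overline\tau-1))}$, and $\delta^{1/2}$ respectively. The exponent bookkeeping in all three cases matches the statement.
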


As a conclusion,   different choices of  $Y$-norms  and   estimates \eqref{key:est}-\eqref{estover2} lead to different convergence rates.  The case (iv) with the weakest norm $Y=L^\tau(\Omega)$ is mostly relevant for applications since
measurement   in higher Sobolev norms is typically difficult to realize in practice.
{
\begin{Remark}
In the case of  \textup{(iv)} of Theorem \eqref{first} with $p=2$, one may expect convergence rate   $\|a^\delta_{\alpha(\delta)}-a^\dag\|_2^2=O(\delta^{\frac{2s}{s+2}})$. Thus, our convergence rates is suboptimal in this case, which may be caused by the treatment of the nonlinearity
of the forward operator.  It is an open question that we shall investigate as a future goal. 
\end{Remark}
}

\subsection{Discussion of hypotheses  in Theorem \ref{first}}\label{sec:hyp}

In the following, we discuss the assumptions (b), \eqref{estover}
and \eqref{estover2} with more details. Although  $S(a)$ belongs only to
$H^1_\tau(\Omega)$ for all   $1\leq \tau<\f{n}{n-1}$, it turns out that the difference $S(a)-S(b)$ for all $a,b\in D(S)$ enjoys a better regularity property, provided that $\kappa$ is regular enough.  This fact allows us to verify the assumptions (b),  \eqref{estover}
and \eqref{estover2} under the following material assumption:

\begin{enumerate}

\item[{\bf (A) }]
There exist $C^{1}$ domains $\Omega_j\subset \R^n$, $j=1,\ldots, N$ such that
$$
\overline{\Omega}_i\cap \overline{\Omega}_j=\emptyset \quad \forall\, i\neq j\,\,\text{and} \,\,
\overline{\Omega}_j\subset \Omega.
$$
Furthermore,     it holds that
$$\kappa\mid_{_{\Omega_c}}\in  C(\overline{\Omega}_c)\quad\text{and}\quad
\kappa\mid_{_{\Omega_j}}\in  C(\overline{\Omega}_j)\quad \forall\, j=0,1\ldots, N,
$$
where $\Omega_c:=\Omega\backslash \bigcup_{j=1}^N$.

\end{enumerate}

\begin{Remark}   \normalfont
 To model a heterogeneous medium, the assumption of piecewise continuous material functions  is reasonable and often used in the mathematical study of elliptic equations (cf. \cite{elschner2007optimal}).

\end{Remark}

\begin{Lemma}[{Theorem 1.1, Remarks 3.17--3.19  in \cite{elschner2007optimal}}]\label{well:neu}
 Assume that {\bf (A)} holds true and $1<r,\tau<+\infty$ such that
 \beq\label{eq:tau2}
 \begin{cases}
\tau\in (1,+\infty) \quad &\text{if}\, \, r\geq n,\\
 \tau \in (1,\f{nr}{n-r}] \quad  &\text{if}\,\, 1<r<n.
 \end{cases}
 \eeq
 Then, for every $f\in L^r(\Omega)$,   the homogeneous
Neumann problem
 \beq\label{neu}
 \begin{cases}
 -\nabla\cdot \kappa\nabla u+u= f \quad &\text{in}\,\, \Omega,\\
 \kappa\frac{\p u}{\p \nu}=0 \quad &\text{on}\,\, \Gamma
 \end{cases}
 \eeq
 admits  a unique weak solution $u \in H^1_\tau(\Omega)$ satisfying
\beq\label{est:neu}
\| u\|_{H^1_\tau(\Omega)}\lesssim \|f\|_{L^r(\Omega)}.
\eeq

\end{Lemma}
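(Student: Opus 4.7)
The plan is to establish the lemma in three stages keyed to the value of $\tau$: first the Hilbert base case $\tau=2$, then higher integrability $\tau>2$, and finally $\tau<2$ by duality. The range constraint \eqref{eq:tau2} encodes exactly when $L^r(\Omega)$ embeds continuously into $H^1_{\tau^*}(\Omega)^*$ (with $\tau^*=\tau/(\tau-1)$) via the Sobolev embeddings of Proposition \ref{pro:sobolev}, so it is the sharp range in which the right-hand side $f\mapsto \int_\Omega f v \, dx$ is a bounded functional on the natural test space.

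For $\tau=2$, the bilinear form $a(u,v):=\int_\Omega(\kappa\nabla u\cdot\nabla v+uv)\,dx$ is continuous and coercive on $H^1_2(\Omega)$ thanks to \eqref{eq:kappa}, so Lax--Milgram yields a unique weak solution with $\|u\|_{H^1_2(\Omega)}\lesssim \|f\|_{H^1_2(\Omega)^*}\lesssim \|f\|_{L^r(\Omega)}$ whenever $r$ lies in the admissible range above. For $\tau>2$ one localises by a partition of unity subordinate to a covering that separates the interior of each $\Omega_j$, a tubular neighbourhood of each interface $\partial\Omega_j$, the bulk region $\Omega_c$, and a neighbourhood of $\Gamma$. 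Away from the interfaces, $\kappa$ is continuous up to the boundary of the relevant patch and classical Agmon--Douglis--Nirenberg / Calder\'on--Zygmund type $W^{1,\tau}$ estimates apply. Near an interface, one uses the $C^1$ regularity of $\partial\Omega_j$ to flatten it by a local diffeomorphism and is reduced to a transmission problem across a hyperplane with $\kappa$ continuous on each side; freezing the coefficient at a chosen point and handling the remainder by a commutator/perturbation argument (together with the explicit reflection solution for the piecewise-constant transmission problem) produces the $W^{1,\tau}$ bound.

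The range $\tau<2$ is then reached by duality: since $\kappa$ is real-valued and the boundary condition is homogeneous Neumann, the operator $-\nabla\cdot\kappa\nabla+\textup{id}$ is formally self-adjoint, so testing the variational identity against the solution to the analogous problem with datum in $L^{\tau^*}$ and invoking the estimate from the previous step yields the desired bound. Uniqueness at each $\tau$ follows either from the Lax--Milgram uniqueness at $\tau=2$ (the stronger solution coincides with the unique $H^1_2$-solution) or from the a priori estimate applied to the homogeneous problem.

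The main obstacle is the discontinuity of $\kappa$ across the interfaces $\partial\Omega_j$: standard $W^{1,\tau}$ theory for $\tau\neq 2$ demands continuity (or at least VMO) of the leading coefficient, which is violated here. The transmission analysis near each interface is therefore the heart of the argument and requires the $C^1$ regularity of $\partial\Omega_j$ to flatten the interface and to separate the problem into two one-sided problems with continuous coefficients. This intricate local-to-global reduction is precisely what is carried out in \cite{elschner2007optimal}, whose Theorem 1.1 and Remarks 3.17--3.19 we invoke directly.
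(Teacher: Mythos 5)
Your proposal matches the paper's treatment: the lemma is stated as a direct citation of \cite{elschner2007optimal}, and the only argument the paper supplies (in the remark following the lemma) is exactly your reduction, namely that \eqref{eq:tau2} gives $\frac{1}{r^*}\geq\frac{1}{\tau^*}-\frac{1}{n}$, hence $H^1_{\tau^*}(\Omega)\embed L^{r^*}(\Omega)$ and $L^r(\Omega)\embed H^1_{\tau^*}(\Omega)^*$, so the isomorphism $-\nabla\cdot\kappa\nabla+1:H^1_\tau(\Omega)\to H^1_{\tau^*}(\Omega)^*$ from that reference yields existence, uniqueness, and the estimate. Your additional sketch of the localization, interface-flattening, and duality machinery behind the cited isomorphism is a reasonable outline of the external proof, but like the paper you ultimately (and legitimately) defer that part to Theorem 1.1 and Remarks 3.17--3.19 of \cite{elschner2007optimal}.
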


\begin{Remark}    \normalfont
As a special case of \cite{elschner2007optimal} and an analogue of  \cite{disser2015optimal}  for Neumann conditions,  the material assumption ${\bf (A)}$ implies that  for every $1<\tau<\infty$ and $\tau^*=\f{\tau}{\tau-1}$
the operator
$
-\nabla \cdot \kappa\nabla+1: H^1_\tau(\Omega)\to H^1_{\tau^*}(\Omega)^*
$	
is  a topological isomorphism.  We note that   \eqref{eq:tau2} implies
\beq\label{ineq:rn}
1-\f{1}{r}\geq 1-\f{1}{\tau}-\f{1}{n} \quad \Rightarrow \quad  \frac{1}{r^*}\geq \frac{1}{\tau^*}-\frac{1}{n}.
\eeq
In view of \eqref{ineq:rn},    Proposition \ref{pro:sobolev} yields the continuous embedding
$
\ H^1_{\tau^*}(\Omega)\embed L^{r^*}(\Omega).
$
  Therefore, under ${\bf (A)}$ and \eqref{eq:tau2},    \eqref{neu} admits  for every $f\in L^r(\Omega) \embed  H^1_{\tau^*}(\Omega)^* $ a unique weak solution $u\in H^1_\tau(\Omega)$ with $\tau$ as in \eqref{eq:tau2}. This unique weak solution satisfies
$$
\| u\|_{H^1_\tau(\Omega)}\lesssim\|f\|_{H^1_\tau(\Omega)^*}\lesssim \|f\|_{L^r(\Omega)}.
$$
Let us also mention  that ${\bf (A) }$  cannot be relaxed due to
the counterexamples in \cite{elschner2007optimal}.
\end{Remark}

\begin{Lemma}\label{lemma:prior}
Assume that ${\bf (A)}$ holds and $\frak p > \frac{n}{2}$.

\begin{enumerate}

\item[\textup{(i)}]  If $n=2$, then for every
$$
\begin{cases}
\tau\in (1,+\infty) \,\, &if \,  \pf> 2, \\
\tau\in (1,\f{2\pf}{2-\pf})\,\, &if \, 1<\pf \le 2
\end{cases}	
$$
there exists a constant $C>0$ such that
$$
\|S(a)-S(b)\|_{H^1_\tau(\Omega)}\leq C\quad \forall a,b\in D(S).
$$

\item[\textup{(ii)}]  If $n=3$, then for every
$\tau\in (1,\pf)$  there exists a constant $C>0$ such that
$$
\|S(a)-S(b)\|_{H^1_\tau(\Omega)}\leq C\quad \forall a,b\in D(S).
$$

\end{enumerate}
\end{Lemma}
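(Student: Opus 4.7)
The plan is to express the difference $w := S(a) - S(b)$ as the unique weak solution of a Neumann problem to which the regularity result of Lemma \ref{well:neu} can be applied, once a single Sobolev--Hölder step has been used to place the right-hand side in a suitable $L^r$-space. Subtracting the weak formulations of $S(a)$ and $S(b)$ in Definition \ref{def:ws}, and then adding $w$ on both sides so as to match the reference operator in Lemma \ref{well:neu}, one finds that $w$ satisfies, in the weak sense,
\begin{equation*}
-\nabla\cdot(\kappa\nabla w) + w \;=\; (b-a)\,S(b) + (1-a)\,w \quad \text{in } \Omega, \qquad \kappa\tfrac{\partial w}{\partial \nu} = 0 \quad \text{on } \Gamma.
\end{equation*}

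The first step is a uniform a priori bound at low regularity. Theorem \ref{the:well} provides $\|S(a)\|_{H^1_{\tau_0}(\Omega)} + \|S(b)\|_{H^1_{\tau_0}(\Omega)} \lesssim \|\mu_{\overline{\Omega}}\|_{M(\overline{\Omega})}$ for every $\tau_0 \in [1, n/(n-1))$, uniformly in $a,b \in D(S)$; hence the same bound holds for $w$. Since $\tau_0 < n$ for $n \in \{2,3\}$, Proposition \ref{pro:sobolev}(i) gives the continuous embedding $H^1_{\tau_0}(\Omega) \embed L^{s_0}(\Omega)$ with $s_0 = n\tau_0/(n-\tau_0)$, and letting $\tau_0 \nearrow n/(n-1)$ the exponent $s_0$ may be chosen arbitrarily below $n/(n-2)$, read as $+\infty$ when $n=2$. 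Combined with the uniform $L^{\pf}$-bound on $D(S)$ from \eqref{def:ds}, generalized Hölder then produces a uniform bound for $(b-a)\,S(b) + (1-a)\,w$ in $L^{r_0}(\Omega)$ with $1/r_0 = 1/\pf + 1/s_0$; equivalently, $r_0$ can be taken arbitrarily close to (and below) $\pf$ when $n=2$, and arbitrarily close to (and below) $3\pf/(3+\pf)$ when $n=3$.

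The second step is to feed this into Lemma \ref{well:neu}. For $n=2$ with $\pf > 2$, we may select $r_0 \in [2, \pf)$, so the condition $r_0 \geq n$ in Lemma \ref{well:neu} is met and we obtain uniform $H^1_\tau(\Omega)$-bounds for every $\tau \in (1, \infty)$. For $n=2$ with $1 < \pf \leq 2$, we have $r_0 < 2 = n$, and Lemma \ref{well:neu} delivers $w$ in $H^1_\tau(\Omega)$ for $\tau \leq 2r_0/(2-r_0)$; letting $r_0 \nearrow \pf$ exhausts the range $(1, 2\pf/(2-\pf))$. For $n=3$, the hypothesis $\pf > 3/2$ is exactly what guarantees $3\pf/(3+\pf) > 1$, so $r_0 \in (1, 3\pf/(3+\pf))$ is admissible and $r_0 < n = 3$; Lemma \ref{well:neu} gives $w \in H^1_\tau(\Omega)$ for $\tau \leq 3r_0/(3-r_0)$, and letting $r_0 \nearrow 3\pf/(3+\pf)$ yields the claimed range $(1, \pf)$.

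The main obstacle is not conceptual but bookkeeping of exponents: one has to verify that the Sobolev threshold $n/(n-2)$ combined with the uniform $L^{\pf}$-control is sharp enough to match the claimed ranges of $\tau$, and that the borderline assumption $\pf > n/2$ is precisely what makes $r_0 > 1$ available so that Lemma \ref{well:neu} can be invoked. Notably, a second iteration of the Sobolev--Hölder step would not improve the outcome, since a single pass already saturates the integrability gain up to the limiting exponent $n/(n-2)$ dictated by the initial regularity $H^1_{\tau_0}$ with $\tau_0 < n/(n-1)$.
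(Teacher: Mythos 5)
Your proposal is correct and follows essentially the same route as the paper: rewrite $w=S(a)-S(b)$ as the weak solution of the reference Neumann problem $-\nabla\cdot(\kappa\nabla w)+w=w+bS(b)-aS(a)$ (your right-hand side $(b-a)S(b)+(1-a)w$ is algebraically the same), bound this source uniformly in $L^{r}$ via Theorem \ref{the:well}, the Sobolev embedding, and H\"older with the $L^{\pf}$-bound on $D(S)$, and then invoke Lemma \ref{well:neu}. The exponent bookkeeping ($r$ arbitrarily close to $\pf$ for $n=2$ and to $3\pf/(3+\pf)$ for $n=3$) matches the paper's computation exactly.
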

\begin{proof}
According to  Definition    \ref{def:ws},  we have
\begin{equation}\label{apriori1}
\begin{aligned}
&\int_{\Omega} \kappa\nabla(S(a)-S(b))\cdot \nabla \varphi
+(S(a)-S(b))\varphi dx\\
&=\int_{\Omega} (S(a)-S(b)+bS(b)-aS(a))\varphi dx \quad\forall \varphi\in C^\infty(\overline{\Omega}) \quad \forall a,b \in D(S).
\end{aligned}
\end{equation}
Let us first consider the case $n=2$. In view of Theorem \ref{the:well} and   Proposition \ref{pro:sobolev} (i), it holds that
$$
\|S(a)\|_{s} \le C(s) \quad \forall a \in D(S) \quad  \forall 1\le s<\infty,
$$
for some  constant $C(s)>0$, independent of $a \in D(S)$. For this reason, making use of the definition $D(S) \subset L^\pf(\Omega)$ (see \eqref{def:ds}), it follows that
$$
\|aS(a) \|_{r}  \le C(r) \quad \forall a \in D(S) \quad \forall 1\le r <  \pf.
$$
for some constant $C(r)>0$, independent of $a \in D(S)$. Combining the above two inequalities yields that
\beq\label{bd:2}
\|S(a)-S(b)+bS(b)-aS(a)\|_{r}\leq C(r) \quad \forall a,b \in D(S) \quad \forall 1\le r <  \pf.
\eeq
If $\pf >2$, then we may choose $r = 2=n$ in \eqref{bd:2} such that applying   Lemma \ref{well:neu} to \eqref{apriori1} yields the claim (i) for   $\tau\in (1,+\infty)$ and $\frak p > 2$.  If $1< \pf \le 2=n$,  then for every    $\tau\in (1,\f{2\pf}{2-\pf})$, we can    find  an  $r<\pf\le n$ such that
$\tau<\f{2r}{2-r} = \f{nr}{n-r} $. Therefore, in view of  \eqref{bd:2},    applying again Lemma \ref{well:neu} to \eqref{apriori1} yields   the claim (i)     for $\tau\in (1,\f{2\pf}{2-\pf})$ and  $1< \frak p \le 2$.

Now let us consider the case $n=3$ and $\frak p > \frac{3}{2}$. Theorem  \ref{the:well} and  Proposition \ref{pro:sobolev} (i)  ensure that
\begin{equation}\label{apriori2}
\|S(a)\|_{s} < C(s)  \quad \forall a \in D(S) \quad\forall  1\le s<3,
\end{equation}
for some constant $C(s)>0$, independent of $a \in D(S).$
Then, making use of the definition of  $D(S) \subset L^\pf(\Omega)$ (see \eqref{def:ds}),   the generalized H\"{o}lder inequality implies       that
\begin{equation}\label{apriori3}
\|aS(a)\|_{r} \le \|a\|_{\pf}\|S(a)\|_{{\frac{r\frak p}{\frak p-r}}} \le MC(r,\frak p)   \quad   \forall a \in D(S) \quad\forall1 \le r <  \f{3\pf}{3+\pf},
\end{equation}
 where we have used \eqref{apriori2} since $1\le  \frac{r\frak p}{\frak p-r} <3$ holds true for  all $1 \le r <  \f{3\pf}{3+\pf}$. Altogether, since $\f{3\pf}{3+\pf}<3$,  \eqref{apriori2} and \eqref{apriori3} yield
 \beq\label{bd:3}
\|S(a)-S(b)+bS(b)-aS(a)\|_{r}\leq C(r,\frak p)   \quad   \forall a,b \in D(S) \quad \forall 1 \le r <   \f{3\pf}{3+\pf} < 3=n,
\eeq
for some constant $C(r,\frak p)>0$, independent of $a,b \in D(S)$. In view of \eqref{bd:3}, applying Lemma \ref{well:neu} to \eqref{apriori1}, we come to the conclusion that    for every
$\tau\in (1,\pf)$,  there exists a constant $C>0$ such that
$$
\|S(a)-S(b)\|_{H^1_\tau(\Omega)}\leq C\quad \forall a,b\in D(S).
$$
This completes the proof.
\end{proof}

\begin{Lemma}\label{lemma:prior2}
Let $n\in \{2,3\}$ and $\pf>\f{n}{2}$.  Assume  that $\kappa\in C^{0,1}(\overline{\Omega})$.  Then, for every $\tau\in (1,\overline{\tau})$ with
\beq\label{sn}
\overline{\tau}:=
\begin{cases}
\pf\quad &n=2,\\
\f{3\pf}{3+\pf}\quad &n=3,
\end{cases}
\eeq
there exists a constant $C>0$ such that
\beq\label{bd:2tau}
\|S(a)-S(b)\|_{H^2_\tau (\Omega)}\le C \quad \forall a,b\in  D(S).
\eeq
%
\end{Lemma}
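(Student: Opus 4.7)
The plan is to interpret $w:=S(a)-S(b)$ as the weak solution of the homogeneous Neumann problem
\[
-\nabla\cdot(\kappa\nabla w)+w = f  \text{ in } \Omega,\qquad \kappa\frac{\partial w}{\partial \nu}=0 \text{ on } \Gamma,
\]
with right-hand side $f:=w+bS(b)-aS(a)$, exactly the identity \eqref{apriori1} already used in Lemma \ref{lemma:prior}. Once a uniform $L^\tau$-bound on $f$ is established for every $\tau<\overline\tau$, I would invoke the $W^{2,\tau}$-regularity theory for the divergence-form Neumann problem with a Lipschitz coefficient $\kappa\in C^{0,1}(\overline\Omega)$ on the $C^{1,1}$-domain $\Omega$ (an upgrade of Lemma \ref{well:neu} under the additional smoothness of $\kappa$) to conclude
\[
\|w\|_{H^2_\tau(\Omega)}\lesssim \|f\|_{L^\tau(\Omega)}+\|w\|_{H^1_\tau(\Omega)},
\]
which together with Lemma \ref{lemma:prior} yields \eqref{bd:2tau}.

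The substantive work is the uniform $L^\tau$-estimate on $f$ for $\tau\in(1,\overline\tau)$. The linear contribution $w$ is already controlled in $H^1_\tau(\Omega)$ by Lemma \ref{lemma:prior}, hence a fortiori in $L^\tau(\Omega)$ by the continuous embedding. For the nonlinear terms I would apply the generalized H\"older inequality,
\[
\|aS(a)\|_{L^\tau(\Omega)}\le \|a\|_{L^{\pf}(\Omega)}\,\|S(a)\|_{L^{\pf \tau/(\pf-\tau)}(\Omega)},
\]
(and analogously for $bS(b)$), where $\|a\|_{L^{\pf}}\le M$ by the definition of $D(S)$, and where the $L^{\pf\tau/(\pf-\tau)}$-norm of $S(a)$ is bounded uniformly on $D(S)$: in the case $n=2$ one has $S(a)\in L^s(\Omega)$ for every $1\le s<\infty$ by Theorem \ref{the:well} combined with Proposition \ref{pro:sobolev}\textup{(ii)}, so that every $\tau<\pf=\overline\tau$ is admissible; in the case $n=3$ the same combination yields $S(a)\in L^s(\Omega)$ for every $1\le s<3$, which forces $\pf\tau/(\pf-\tau)<3$, i.e.\ $\tau<3\pf/(3+\pf)=\overline\tau$.

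The principal obstacle is the need for genuine second-order elliptic regularity up to the boundary for the Neumann problem with a merely Lipschitz diffusion coefficient. Lemma \ref{well:neu} provides only $H^1_\tau$-regularity under hypothesis {\bf (A)}, whereas here I require the $W^{2,\tau}$-analogue. Under $\kappa\in C^{0,1}(\overline\Omega)$ and $\partial\Omega\in C^{1,1}$ this is classical (Grisvard-type estimates or the $L^p$-theory of Ladyzhenskaya--Ural'tseva), but the result has to be cited carefully, and one must verify that the constant is independent of $a,b\in D(S)$; this independence is automatic because the differential operator $-\nabla\cdot(\kappa\nabla\,\cdot)+1$ and its associated Neumann boundary condition do not depend on $a$ or $b$, while all dependence on $(a,b)$ has been pushed into the right-hand side $f$ whose $L^\tau$-norm has already been controlled uniformly.
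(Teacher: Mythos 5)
Your proposal follows the paper's proof essentially verbatim: the paper likewise rewrites $S(a)-S(b)$ as the weak solution of the homogeneous Neumann problem with right-hand side $S(a)-S(b)+bS(b)-aS(a)$, invokes the uniform $L^r$-bounds \eqref{bd:2} (for $n=2$) and \eqref{bd:3} (for $n=3$) established in the proof of Lemma \ref{lemma:prior} --- which your H\"older/embedding computation reproduces and which is exactly where the threshold $\overline\tau$ comes from --- and then concludes by the classical $W^{2,\tau}$-regularity theorem (\cite[Theorem 2.4.1.3]{grisvard2011elliptic}). The only cosmetic slip is that for $n=2$ the bound $S(a)\in L^s(\Omega)$ for all $s<\infty$ follows from Proposition \ref{pro:sobolev}(i) with $\tau<2=n$ (letting $\tau\uparrow 2$), not from part (ii).
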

\begin{proof}
Let $a,b\in D(S)$. By the definition of the weak solution \eqref{def:ws},
\begin{align*}
&\int_{\Omega} \kappa\nabla(S(a)-S(b))\cdot \nabla \varphi
+(S(a)-S(b))\varphi dx\\
&=\int_{\Omega} (S(a)-S(b)+bS(b)-aS(a))\varphi dx \quad\forall \varphi\in C^\infty(\overline{\Omega}).
\end{align*}
Therefore, in view of \eqref{bd:2} (for $n=2$) and  \eqref{bd:3} (for $n=3$),  the classical $W^{2,\tau}(\Omega)$-regularity result for elliptic equations (\cite[Theorem 2.4.1.3]{grisvard2011elliptic}) implies  \eqref{bd:2tau}.
\end{proof}

In conclusion,  we see that assumptions  \eqref{estover}
and \eqref{estover2} can be guaranteed by Lemma \ref{lemma:prior} and
 Lemma \ref{lemma:prior2}, respectively.

\section{Conclusion}  Based on  the Littlewood-Paley theory  and the  concept of    $\mathcal{R}$-boundedness, we   developed    sufficient criteria (Theorem \ref{the:vsc}) for VSC  \eqref{VSCp} in $L^p(\Omega,\mu)$-spaces with $1<p<+\infty$. The proposed sufficient criteria consist of the existence of an LP-decomposition for the complex dual space $L^q(\Omega,\mu;\C)$ ($q=\frac{p}{p-1}$) together with a conditional stability estimate
and a regularity requirement for the true solution  in terms of   Triebel-Lizorkin-type  norms. In Section \ref{sec:app}, the developed abstract result is applied to
 the  inverse reconstruction problem  of    unbounded diffusion $L^p(\Omega)$-coefficients in elliptic equations with measure data \eqref{elliptic}.  We derived existence and plain convergence results for the associated Tikhonov regularization problem \eqref{mina} with $L^p(\Omega)$-norm penalties (Theorem \ref{tikop}). As final results (Theorem \ref{first} and Lemmas \ref{lemma:prior} and \ref{lemma:prior2})), we prove that all requirements of   Theorem \ref{the:vsc} are satisfied for the inverse problem \eqref{eq:ip}, leading to convergence rates for the Tikhonov regularization method \eqref{mina} (Corollary  \ref{corfinal}).

 Our future goals are threefold. First, noticing that there are recent progresses  on VSC for $\ell^1$-regularization (see. e.g. \cite{AHR14,flemming2018,Wang}), we aim at extending our study to the Tikhonov regularization method with $L^1(\Omega,\mu)$-penalties. On the other hand, in some applications, the unknown solution could fail to have
a finite penalty value, if the penalty is oversmoothing. Recently, such  oversmoothing regularizations have been studied for  inverse problems in
Hilbert scales (see e.g. \cite{Egger19,Hofmann20,HofMat18}). As  Triebel-Lizorkin-type  space allows us to  work with      scales of Banach space through the use of sectorial operators,  it would be attempting to study
 oversmoothing regularizations under an $L^p(\Omega)$-setting.  Thirdly,
 we would like to extend the developed results to  nonlinear and non-smooth PDEs, in particular for those arising from electromagnetic applications (\cite{LamYousept2020,Yousept2012,Yousept2013,Yousept2017}).

{
\section{Appendix} 

\subsection{Properties of $\varphi_p$ from Remark \ref{concremark}} It is obvious that  $\varphi_p(\mathbb{R})\subset \mathbb{R}$. Let $N\in \mathbb{Z}$ and 
$0<\tau\leq 1$ such that $N+\tau=p-1$.  Since 
$\varphi_p(t)=t^{p-1}$ for $t>0$, and $\varphi_p(t)=-(-t)^{p-1}$ for $t<0$, it follows   for all 
$l=0,1,2\cdots,N$ that  
\beq\label{varphip}
\varphi_p^{(l)}(0)=0\,\, \text{and}\,\, \varphi_{p}^{(l)}(t)=\begin{cases}
c_l t^{p-1-l}   &\text{if }t>0,\\
(-1)^{l+1}c_l (-t)^{p-1-l}  &\text{if }t<0,
\end{cases}
\eeq
where $c_l:=(p-1)\cdot (p-2)\cdots (p-l)$,
 and hence $|\varphi_p^{(l)}(t)|\leq  c_l |t|^{p-1-l}$ for all $t\in \R$ with $l=0,1,\cdots, N$. Using \eqref{varphip} and 
the inequality: 
$(t+s)^\tau \leq t^\tau +s^\tau $ for all $t,s\geq 0$,
  we have that 
\[
\frac{|\varphi^{(N)}_{p}(t_1)-\varphi^{(N)}_{p}(t_0) |}{|t_1-t_0|^{\tau}}
=c_l\frac{|t_1^{\tau}-t_0^\tau |}{|t_1-t_0|^{\tau}}{=c_l\frac{(t_1-t_0+t_0)^{\tau}-t_0^\tau }{(t_1-t_0)^{\tau}}\leq c_l},
\]
whenever $t_1>t_0\geq 0$. The case $t_0>t_1\geq 0$ can be handled similarly. Thus,
\beq\label{varphip1}
\sup_{t_1,t_0\geq 0,t_0\neq t_1}\frac{|\varphi^{(N)}_{p}(t_1)-\varphi^{(N)}_{p}(t_0) |}{|t_1-t_0|^{\tau}}\leq c_l.
\eeq
In view of \eqref{varphip1} and the fact that $\varphi^{(N)}_{p}(t)=(-1)^{l+1}\varphi^{(N)}_{p}(-t)$ for any $t<0$ by  \eqref{varphip}, we obtain that 
\beq\label{varphip2}
\sup_{t_1,t_0\leq 0, t_0\neq t_1}\frac{|\varphi^{(N)}_{p}(t_1)-\varphi^{(N)}_{p}(t_0) |}{|t_1-t_0|^{\tau}}\leq c_l.
\eeq
If $t_1t_0<0$, then from \eqref{varphip} it follows that 
\beq\label{varphip3}
\frac{|\varphi^{(N)}_{p}(t_1)-\varphi^{(N)}_{p}(t_0) |}{|t_1-t_0|^{\tau}}\leq 
\frac{|\varphi^{(N)}_{p}(t_1)|}{|t_1-t_0|^{\tau}}+\frac{|\varphi^{(N)}_{p}(t_0)|}{|t_1-t_0|^{\tau}}
\leq \frac{|\varphi^{(N)}_{p}(t_1)|}{|t_1|^{\tau}}+\frac{|\varphi^{(N)}_{p}(t_0) |}{|t_0|^{\tau}}
\leq 2 c_l.
\eeq
Therefore, we   conclude from \eqref{varphip1}--\eqref{varphip3} that 
$$
\sup_{t_0\neq t_1}\frac{|\varphi^{(N)}_{p}(t_1)-\varphi^{(N)}_{p}(t_0) |}{|t_1-t_0|^{\tau}}\leq 2c_l.
$$ 

\subsection{Proof of \eqref{varphplus3} } 
For $s\in \R$, $1\leq q< \infty$, and $1\leq r\leq \infty$, let $F^s_{q,r}(\mathbb{R}^n;\mathbb{C})$ denote the Triebel-Lizorkin space as defined in \cite[Page 8]{Runst}, and $F^s_{q,r}(\mathbb{R}^n)$ the subspace of $F^s_{q,r}(\mathbb{R}^n;\mathbb{C})$ consisting of real-valued functions in $F^s_{q,r}(\mathbb{R}^n;\mathbb{C})$, which satisfies 
\[
F^s_{q,2}(\mathbb{R}^n)=H^s_{q}(\mathbb{R}^n)
\]
if $s\in \R$ and $1<q<\infty$
(see  \cite[Section 2.1.2. Proposition 1 (vii)]{Runst}).
In addition,  the inclusion 
\beq\label{Hsinclusion}
H^{s'}_{q}(\mathbb{R}^n)\subset F^{s''}_{q,r''}(\mathbb{R}^n)
\eeq
is valid if $1\leq q<\infty$, $s'>s''$, and $1\leq r',r''\leq \infty$ (see  \cite[Section 2.2.1, Proposition 1]{Runst}). From \cite[Section 5.4.4, Theorem 1]{Runst}, it follows that the inclusion
$$
T_{\varphi_{p,+}}(F^{s/(p-1)}_{p,2(p-1)}(\R^n))\subset F^{s}_{q,2}(\R^n)
=H^s_{q}(\mathbb{R}^n)
$$
holds if $1<p<2$ and $0<s<p-1$, which, together with \eqref{Hsinclusion}, implies the inclusion 
\beq\label{inclusion22}
T_{\varphi_{p,+}}(H^t_p(\R^n))\subset 
H^s_{q}(\mathbb{R}^n),
\eeq
if $t>\f{s}{p-1}$. By the argument used in  Remark \ref{concremark} (i), we   deduce that 
 \eqref{inclusion22} remains true if we replace $H^t_p(\R^n)$ and $H^s_{q}(\mathbb{R}^n)$ by 
 $H^t_p(\Omega)$ and $H^s_{q}(\Omega)$, respectively. This completes the proof.

}

\footnotesize

 \end{document}